\tikzstyle{black dot}=[fill=black, draw=black, shape=circle, minimum size=3pt, inner sep=0pt]
\tikzstyle{black dot small}=[fill=black, draw=black, shape=circle, minimum size=3pt, inner sep=0pt]
\tikzstyle{big white circle}=[fill=white, draw=black, shape=circle, minimum width=0.75cm]
\tikzstyle{white dot big}=[fill=white, draw=black, shape=circle, inner sep=1pt]
\tikzstyle{white dot}=[fill=white, draw=black, shape=circle, minimum size=3pt, inner sep=0pt]
\tikzstyle{flat box}=[fill=white, draw=black, shape=rectangle, minimum width=2.5cm, minimum height=0.5cm]
\tikzstyle{square}=[fill=white, draw=black, shape=rectangle]
\tikzstyle{flat box 2}=[fill=white, draw=black, shape=rectangle, minimum height=0.5cm, minimum width=1.0cm]
\tikzstyle{over }=[front]
\tikzstyle{theta}=[fill=black, draw=black, shape=ellipse, minimum height=6pt, minimum width=6pt, inner sep=0pt]
\tikzstyle{thetabig}=[fill=black, draw=black, shape=ellipse, minimum width=1cm, minimum height=0.01cm]
\tikzstyle{thetainv}=[fill=white, draw=black, shape=ellipse, minimum height=6pt, minimum width=6pt, inner sep=0pt]
\tikzstyle{thetabinv}=[fill=white, draw=black, shape=ellipse, minimum width=1cm, minimum height=0.01cm]
\tikzstyle{black over}=[fill=white, draw=black, shape=circle]
\tikzstyle{mid arrow}=[-, postaction={on each segment={mid arrow}}]
\tikzstyle{end arrow}=[->]
\tikzstyle{red mid arrow}=[-, draw={rgb,255: red,214; green,42; black,51}, postaction={on each segment={mid arrow}}, line width=1.1pt]
\tikzstyle{rdots}=[-,dotted, draw=black,line width=1pt]
\tikzstyle{blue}=[-, draw=black, line width=1.1pt]
\tikzstyle{blue mid arrow}=[-, draw=black, postaction={on each segment={mid arrow}}, line width=1.1pt]
\tikzstyle{over}=[-, link]
\tikzstyle{mapsto}=[{|->}]
\tikzstyle{blarrow}=[draw=black, ->, line width=1.1pt]
\tikzstyle{bl}=[-, draw=black, line width=1.1pt]
\tikzstyle{black over}=[-, link2, line width=1.1pt]
\tikzset{
  % style to apply some styles to each segment of a path
  on each segment/.style={
    decorate,
    decoration={
      show path construction,
      moveto code={},
      lineto code={
        \path [#1]
        (\tikzinputsegmentfirst) -- (\tikzinputsegmentlast);
      },
      curveto code={
        \path [#1] (\tikzinputsegmentfirst)
        .. controls
        (\tikzinputsegmentsupporta) and (\tikzinputsegmentsupportb)
        ..
        (\tikzinputsegmentlast);
      },
      closepath code={
        \path [#1]
        (\tikzinputsegmentfirst) -- (\tikzinputsegmentlast);
      },
    },
  },
  % style to add an arrow in the middle of a path
  mid arrow/.style={postaction={decorate,decoration={
        markings,
        mark=at position .5 with {\arrow[#1]{stealth}}
      }}},
}
\tikzset{%
  link/.style    = { white, double = black, line width =2.0pt,
                     double distance = 1.1pt },
    link2/.style    = { white, double = black, line width = 1.8pt,
  	double distance = 0.4pt },
  channel/.style = { white, double = black, line width = 0.8pt,
                     double distance = 0.6pt },
}
\DeclareRobustCommand{\em}{%
	\@nomath\em \if b\expandafter\@car\f@series\@nil
	\normalfont \else \slshape \fi}
\numberwithin{equation}{section}
\numberwithin{equation}{section}
\newtheoremstyle{style1}% name of the style to be used
{13pt}% measure of space to leave above the theorem. E.g.: 3pt
{13pt}% measure of space to leave below the theorem. E.g.: 3pt
{}% name of font to use in the body of the theorem
{}% measure of space to indent
{\normalfont\bfseries}% name of head font
{.}% punctuation between head and body
{.5em}% space after theorem head; " " = normal interword space
{}
\theoremstyle{style1}
\newtheorem{definition}{Definition}[section]
\newtheorem{example}[definition]{Example}
\newtheorem{remark}[definition]{Remark}
\newtheorem*{repd@theorem}{\repd@title}
\newcommand{\newrepdtheorem}[2]{%
	\newenvironment{repd#1}[1]{%
		\def\repd@title{#2 \ref{##1}}%
		\begin{repd@theorem}}%
		{\end{repd@theorem}}}
\newcommand{\catf}[1]{{\mathsf{#1}}}
\newtheoremstyle{style2}% name of the style to be used
{13pt}% measure of space to leave above the theorem. E.g.: 3pt
{13pt}% measure of space to leave below the theorem. E.g.: 3pt
{\slshape}% name of font to use in the body of the theorem
{}% measure of space to indent
{\normalfont\bfseries}% name of head font
{.}% punctuation between head and body
{.5em}% space after theorem head; " " = normal interword space
{}
\theoremstyle{style2}
\newtheorem*{rep@theorem}{\rep@title}
\newcommand{\newreptheorem}[2]{%
	\newenvironment{rep#1}[1]{%
		\def\rep@title{#2 \ref{##1}}%
		\begin{rep@theorem}}%
		{\end{rep@theorem}}}
\newtheorem{lemma}[definition]{Lemma}
\newtheorem{theorem}[definition]{Theorem}
\newtheorem{proposition}[definition]{Proposition}
\newtheorem{corollary}[definition]{Corollary}
\newcommand{\spaceplease}{\needspace{5\baselineskip}}
\newcommand{\morespaceplease}{\needspace{20\baselineskip}}
\newcommand{\Z}{\mathbb{Z}}
\newcommand{\ra}[1]{\xrightarrow{\ #1 \ }}
\newcommand{\framed}{\catf{f}E_2}
\newcommand{\cat}[1]{\mathcal{#1}}
\newcommand{\Aut}{\operatorname{Aut}}
\newcommand{\Hom}{\operatorname{Hom}}
\newcommand{\id}{\operatorname{id}}
\let\to\undefined
\newcommand{\to}{\longrightarrow}
\let\mapsto\undefined
\newcommand{\mapsto}{\longmapsto}
\newcommand{\Lexf}{\catf{Lex}^\mathsf{f}}
\newcommand{\Vect}{\catf{Vect}}
\newcommand{\opp}{\text{opp}}
\let\colon\undefined\newcommand{\colon}{:}
\DeclareMathSymbol{\Phiit}{\mathalpha}{letters}{"08} % kursive girghiche Buchstaben
\DeclareMathSymbol{\Psiit}{\mathalpha}{letters}{"09}
\DeclareMathSymbol{\Sigmait}{\mathalpha}{letters}{"06}
\DeclareMathSymbol{\Xiit}{\mathalpha}{letters}{"04}
\DeclareMathSymbol{\Piit}{\mathalpha}{letters}{"05}\let\Pi\undefined\newcommand{\Pi}{\Piit}
\DeclareMathSymbol{\Gammait}{\mathalpha}{letters}{"00}
\DeclareMathSymbol{\Omegait}{\mathalpha}{letters}{"0A}\let\Omega\undefined\newcommand{\Omega}{\Omegait}
\DeclareMathSymbol{\Upsilonit}{\mathalpha}{letters}{"07}
\DeclareMathSymbol{\Thetait}{\mathalpha}{letters}{"02}
\DeclareMathSymbol{\Lambdait}{\mathalpha}{letters}{"03}\let\Lambda\undefined\newcommand{\Lambda}{\Lambdait}
\let\Phi\undefined\newcommand{\Phi}{\Phiit}
\let\Sigma\undefined\newcommand{\Sigma}{\Sigmait}
\let\Psi\undefined\newcommand{\Psi}{\Psiit}
\let\Gamma\undefined\newcommand{\Gamma}{\Gammait}
\newcommand{\nakar}{\catf{N}^\catf{r}}
\newcommand{\nakal}{\catf{N}^\catf{l}}
\newenvironment{pnum}{\begin{enumerate}[label=(\roman*)]}{\end{enumerate}}
\renewcommand\section{\@startsection {section}{1}{\z@}%
	{-3.5ex \@plus -1ex \@minus -.2ex}%
	{2.3ex \@plus.2ex}%
	{\normalfont\scshape\centering}}
\titleformat{\subsection}[runin]% runin puts it in the same paragraph
{\normalfont\bfseries}% formatting commands to apply to the whole heading
{\thesubsection}% the label and number
{0.5em}% space between label/number and subsection title
{}% formatting commands applied just to subsection title
[.]% punctuation or other commands following subsection title
\definecolor{Blue}  {rgb} {0.282352,0.239215,0.803921}
\definecolor{Green} {rgb} {0.133333,0.545098,0.133333}
\definecolor{Red}   {rgb} {0.803921,0.000000,0.000000}
\definecolor{Violet}{rgb} {0.580392,0.000000,0.827450}
\newcounter{jfc}
\begin{document}
	
\vspace*{0.5cm}
	\begin{center}	\textbf{\large{The distinguished invertible object \\[0.5ex] as ribbon dualizing object in the Drinfeld center}}\\	\vspace{1cm}	{\large Lukas Müller $^{a}$} \ and \ \ {\large Lukas Woike $^{b}$}\\ 	\vspace{5mm}{\slshape $^a$ Perimeter Institute  \\  N2L 2Y5 Waterloo \\ Canada}	\\[7pt]	{\slshape $^b$ Institut de Mathématiques de Bourgogne\\ 
			UMR 5584 \\ CNRS \& Université de Bourgogne \\ F-21000 Dijon \\  France }\end{center}	\vspace{0.3cm}	
	\begin{abstract}\noindent 
		We prove that the Drinfeld center $Z(\cat{C})$ of a pivotal finite tensor category $\mathcal{C}$ comes with the structure of a ribbon Grothendieck-Verdier category in the sense of Boyarchenko-Drinfeld. Phrased operadically, this makes $Z(\cat{C})$ into a cyclic algebra over the framed $E_2$-operad. The underlying object of the dualizing object is the distinguished invertible object of $\mathcal{C}$ appearing in the well-known Radford isomorphism of Etingof-Nikshych-Ostrik. Up to equivalence, this is the unique ribbon Grothendieck-Verdier structure on $Z(\cat{C})$ extending the canonical balanced braided structure that $Z(\cat{C})$ already comes equipped with. The duality functor of this ribbon Grothendieck-Verdier structure coincides with the rigid duality if and only if $\mathcal{C}$ is spherical in the sense of Douglas-Schommer-Pries-Snyder. The main topological consequence of our algebraic result is that $Z(\cat{C})$ gives rise to an ansular functor, in fact even a modular functor regardless of whether $\mathcal{C}$ is spherical or not. In order to prove the aforementioned uniqueness statement for the ribbon Grothendieck-Verdier structure, we derive a seven-term exact sequence characterizing the space of ribbon Grothendieck-Verdier structures on a balanced braided category. This sequence features the Picard group of the balanced version of the Müger center of the balanced braided category.
		\end{abstract}

\tableofcontents

\spaceplease
\section{Introduction and summary}
The notion of a \emph{finite tensor category}~\cite{etingofostrik,egno}
has, since its introduction almost twenty years ago, developed into one 
of the key algebraic notions that allows us to describe rich classes 
of monoidal categories naturally appearing in quantum topology.
Finite tensor categories include fusion categories, but the notion is designed to accommodate
\emph{non-semisimple} examples as well,	 such as those arising from finite groups in positive characteristic, 
certain quantum groups or logarithmic conformal field theories.  

The definition of a finite tensor category relies on the notion of a \emph{finite linear category}. For an 
algebraically closed field $k$ that we fix throughout, a finite linear category is a linear abelian category with finite-dimensional morphism spaces, enough projective objects, finitely many simple objects up to isomorphism subject to the requirement that every object has finite length. 
A \emph{finite tensor category} 
is then defined as finite linear category
with a bilinear monoidal product that is \emph{rigid}, i.e.\ admits left duals $-^\vee$ and right duals $^\vee-$ 
(where we follow the conventions of \cite[Section~2.10]{egno}); moreover, one 
 requires the monoidal unit to be simple. 
A \emph{pivotal structure} on a finite tensor category $\cat{C}$
 is a monoidal isomorphism $\omega : \id_\cat{C} \to -^{\vee \vee}$
 from the identity of $\cat{C}$ to the square of the dual functor $-^\vee : \cat{C}\to\cat{C}^\opp$ from $\cat{C}$ to its opposite category $\cat{C}^\opp$
  (once we have a pivotal structure, left and right duals coincide; therefore, we will not distinguish between left and right duals in that situation).

 For any finite tensor category $\cat{C}$,
 we can define the so-called \emph{Drinfeld center} $Z(\cat{C})$ as the category of pairs $(X,\beta)$, where $X$ is an object in $\cat{C}$ and $\beta$ is a \emph{half braiding}, 
 i.e.\ an natural isomorphism $\beta :X\otimes-\cong -\otimes X$ subject to the so-called hexagon axioms that ensure that $Z(\cat{C})$ becomes a \emph{braided} finite tensor category.
 A natural, but subtle question is the one about the duality on the Drinfeld center.
The Drinfeld center $Z(\cat{C})$ is rigid, but this duality may have poor properties.
The key problem is the following: The pivotal structure on $\cat{C}$ gives rise to a pivotal structure on $Z(\cat{C})$ and hence to a balancing, i.e.\  a natural automorphism $\theta$ of $\id_{Z(\cat{C})}$
 such that $\theta_I=\id_I$ and
 $\theta_{X\otimes Y}=c_{Y,X}c_{X,Y}(\theta_X \otimes \theta_Y)$ for $X,Y \in Z(\cat{C})$, where $c$ denotes the braiding on $Z(\cat{C})$. We refer to this as the \emph{canonical balanced braided structure} on the Drinfeld center of a pivotal finite tensor category.
 Unfortunately, this will generally not make $Z(\cat{C})$ into a ribbon category because the relation $\theta_X^\vee = \theta_{X^\vee}$ for $X\in Z(\cat{C})$ may not hold. This is 
  the Open~Problem~(7) raised by Müger in \cite[Section~6]{mueger-tensor}, namely to prove
  that (or to investigate when)
  the Drinfeld center $Z(\cat{C})$ of a finite tensor category is a \emph{modular category}, i.e.\ a finite ribbon category with non-degenerate braiding, where the latter means that all objects that trivially double braid with all other objects are necessarily finite direct sums of the monoidal unit.
  The braiding of $Z(\cat{C})$ is automatically non-degenerate by \cite[Proposition~4.4]{eno-d}
  and \cite[Theorem~1.1]{shimizumodular}, so the problem reduces to the question:
  \begin{itemize}
  	\item[(R)]\emph{When (and with which structure) is the Drinfeld center
  		 of a  finite tensor category ribbon?}
  \end{itemize}
      While it may seem like a minor detail whether or not the ribbon property  is fulfilled for the Drinfeld center, the topological implications are significant, namely with respect to the question whether one may construct mapping class group representations from $Z(\cat{C})$; we expand on this in a moment.

      In this article, we give an answer to question~(R): We prove that the Drinfeld center of every pivotal finite tensor category is always ribbon in a natural way if  we allow a \emph{ribbon Grothendieck-Verdier structure} in the sense of Boyarchenko-Drinfeld~\cite{bd} instead of a traditional ribbon structure (note that $\cat{C}$ is still a finite tensor category in the conventional sense, i.e.\ with rigid duality).
      As a special case, we recover an answer to~(R) given by Shimizu~\cite{shimizuribbon} (the semisimple version of which was already given by Müger~\cite{mueger-sph}), as we explain in a moment.

      Before giving a \emph{topological} justification for the
      slight, but critical adjustment of the framework in which we answer question~(R), let us clarify the notion of a ribbon Grothendieck-Verdier category:
      For a monoidal category $\cat{B}$, a \emph{dualizing object}  in the sense of Boyarchenko-Drinfeld~\cite{bd} is an object $K\in \cat{B}$ such that the hom functor $\cat{B}(K,X\otimes -)$ 
      is representable for all $X\in \cat{B}$ by some object $DX$  in a way that the functor $D:\cat{B}\to\cat{B}^\opp$ sending $X$ to $DX$ is an equivalence
      (we should  warn the reader that our conventions are dual to the ones in \cite{bd}). The functor $D:\cat{B}\to\cat{B}^\opp$ is then called \emph{Grothendieck-Verdier duality}.
      It follows directly from the definition that $K\cong DI$.
      A Grothendieck-Verdier duality need not be a rigid duality.
      A \emph{ribbon Grothendieck-Verdier category}
      is a balanced braided category $\cat{B}$ (as recalled above) with a Grothendieck-Verdier 
      duality whose duality functor $D$ satisfies
      $\theta_{DX}=D\theta_X$ for all $X\in \cat{B}$.
      We will consider throughout Grothendieck-Verdier categories in the symmetric monoidal bicategory $\Lexf$ of finite categories, left exact functors and natural transformations, i.e.\ the underlying category is a finite linear category, 
       the monoidal product is a 1-morphism in $\Lexf$ and the natural isomorphisms (braiding, balancing etc.) are linear isomorphisms.
      Considering left exact monoidal products instead of right exact ones might seem unusual, but this is just a matter of passing to the opposite category. Moreover, finite tensor categories have an exact monoidal product in any case because of rigidity.

      In order to build a ribbon Grothendieck-Verdier structure on the Drinfeld center of a pivotal finite tensor category $\cat{C}$, we need a dualizing object.
      The underlying object in $\cat{C}$ of this dualizing object will be the
      \emph{distinguished invertible object} $\alpha \in \cat{C}$
      introduced by Etingof-Nikshych-Ostrik in \cite{eno-d}
      that comes with a monoidal natural isomorphism
      \begin{align}
      \label{radfordisointro} -^{\vee\vee\vee\vee} \cong \alpha \otimes - \otimes \alpha^{-1} \ , 
      \end{align}
      the \emph{Radford isomorphism}.
      
      We may now state the main result:

      \begin{reptheorem}{thmribbon}
      	Let $\cat{C}$ be a pivotal finite tensor category.
      	Then  the distinguished invertible object of $\cat{C}$ equipped with the half braiding induced by the Radford isomorphism and the pivotal structure of $\cat{C}$ is a dualizing object that makes $Z(\cat{C})$ a ribbon Grothendieck-Verdier category.
      	Up to equivalence, this is the only ribbon Grothendieck-Verdier structure on $Z(\cat{C})$ that extends the canonical balanced braided structure.
      \end{reptheorem}

      The fact that the distinguished invertible object, seen as object in the Drinfeld center, is dualizing is an easy observation (that part would be true for any invertible object in the center). As one might expect, it is the `ribbon'  and the uniqueness part of the above statement that is somewhat involved. 
    
    We can say a little more: The \emph{space} of choices for the ribbon Grothendieck-Verdier structure is a $Bk^\times$, the classifying space for the group $k^\times$ of units of $k$, see Corollary~\ref{corcyclic}.
      
      The slight, but critical adjustment of the framework in which we answer question~(R) is justified as follows:
   \begin{pnum}
   	\item
     Even though we change the notion of duality from the traditional rigid duality to a Grothen\-dieck-Verdier duality, we retain 
     control over when exactly
     the ribbon Grothen\-dieck-Verdier structure that we describe is a traditional ribbon structure. This will be the case if and only if
      	$\cat{C}$ is \emph{spherical} in the sense of Douglas-Schommer-Pries-Snyder~\cite{dsps}.
      	 A pivotal finite tensor category $\cat{C}$ is called \emph{spherical}~\cite[Definition~3.5.2]{dsps}
      	if $\alpha \cong I$ in $\cat{C}$ (this property is called \emph{unimodularity})
      	and if the trivialization of $-^{\vee\vee\vee\vee}$ resulting from $\alpha \cong I$ 
      	is equal to the one provided by the square of the pivotal structure.
      	 In the spherical case, our main result will reduce to an answer to question~(R) given by Shimizu in~\cite{shimizuribbon} in the spherical case; the semisimple version of Shimizu's result was already proven by Müger in~\cite[Theorem~1.2]{mueger-sph}.
      	 
      	 In fact, we find:
      	 
      	  \begin{repcorollary}{corspherical}
      	 	For the 
      	 ribbon Grothendieck-Verdier duality on the Drinfeld center $Z(\cat{C})$ of a pivotal finite tensor category $\cat{C}$ described in Theorem~\ref{thmribbon}, the following are equivalent:
      	 \begin{pnum}
      	 	\item The dualizing object is isomorphic to the unit in $Z(\cat{C})$.
      	 	\item $\cat{C}$ is spherical.
      	 	\item The Grothendieck-Verdier duality of $Z(\cat{C})$ agrees with the rigid duality.
      	 	\item $Z(\cat{C})$ with its canonical balanced braided structure is a modular category.
      	 \end{pnum} 
      	 \end{repcorollary}

      	\item  The most important justification for 
      	answering question~(R) in the framework of Grothendieck-Verdier dualities is that
      	 for the question whether the Drinfeld center can be used to build representations of mapping class groups,
      	it will actually be \emph{completely irrelevant} whether we are dealing with a traditional ribbon structure or a ribbon Grothendieck-Verdier duality.  
     This is in fact a rather non-trivial point that we have to explain:
 The main reason why it is interesting to have the structure of a modular category on the Drinfeld center is that a modular category (even if it is non-semisimple) can be used as an input for the Lyubashenko construction~\cite{lyubacmp} to produce a modular functor, i.e.\ mapping class group representations. 
Lyubashenko's construction cannot be applied to a Drinfeld center $Z(\cat{C})$ 
if $Z(\cat{C})$ is not ribbon, but
one can still follow an operadic approach. 
Since $Z(\cat{C})$ is balanced and braided, it is an algebra over the framed $E_2$-operad $\framed$, the operad whose $n$-ary operations are given by embeddings
of $n$ two-dimensional disks into another disk that are composed by translations, rescalings and rotations~\cite{WahlThesis,salvatorewahl}. 
	But the framed $E_2$-operad is also cyclic in the sense of Getzler-Kapranov~\cite{gk}, i.e.\ it comes with a prescription to consistently permute inputs with the output.
	The ribbon Grothendieck-Verdier duality gives us exactly a cyclic structure on a ($\Lexf$-valued) $\framed$-algebra~\cite{cyclic}.  
	A cyclic framed $E_2$-algebra structure, in particular the one that we have on $Z(\cat{C})$, extends uniquely to an \emph{ansular functor}, see~\cite[Section~7.2]{cyclic} and~\cite[Theorem~5.9]{mwansular}, i.e.\ a system of representations of mapping class groups of handlebodies (the handlebodies are always compact oriented and three-dimensional).
	In other words, we obtain for each handlebody $H_g$, characterized by the genus $g\ge 0$ of its boundary surface, a vector space $B_g^{Z(\cat{C})}$,
		the so-called \emph{space of conformal blocks for $H_g$}, together with a representation of the mapping class group of $H_g$. 
	Based on the general description of spaces of conformal blocks in~\cite{cyclic,mwansular}, we give in Corollary~\ref{corconfblocks} a formula for  $B_g^{Z(\cat{C})}$.
For $Z(\cat{C})$, the handlebody group representations on the spaces of conformal blocks will actually extend to representations of mapping class groups of surfaces. In particular, the space of conformal blocks can, in this case, be associated --- without ambiguity --- to a surface instead of a handlebody.
	The latter easily will follow 
	as an application of 
	 the factorization homology approach to modular functors in the paper~\cite{brochierwoike} (however, this is just a background information and not needed for the present paper).

	   The topological perspective provides a characterization of sphericality in terms of the ansular (in fact modular) functor associated to $Z(\cat{C})$, more precisely in terms of the space of conformal blocks $B_0^{Z(\cat{C})}$ associated by the ansular functor of $Z(\cat{C})$ to the three-dimensional ball with boundary or, equivalently, the modular functor of $Z(\cat{C})$ to the sphere. In terms of language, we will follow the second perspective and think of $B_0^{Z(\cat{C})}$ as being associated to the sphere as this is much more common (this change in perspective is also consistent with the fact that in the oriented setting both the mapping class group of the ball and the sphere are trivial). By \cite[Corollary~6.3]{mwansular} $B_0^{Z(\cat{C})}$ is isomorphic to the vector space $Z(\cat{C})(\alpha,I)$ of morphisms
	   from the dualizing object
	   $\alpha \in Z(\cat{C})$ whose half braiding is induced by the Radford isomorphism and the pivotal structure to $I\in Z(\cat{C})$. 
	 
	 \begin{repcorollary}{ValueonS2}
	 	A pivotal finite tensor category is spherical if and only if the space of conformal blocks of the sphere for its Drinfeld center is one-dimensional.
	 \end{repcorollary}
	 
\end{pnum}

Finally, we want to highlight one step in the proof of the uniqueness statement contained in Theorem~\ref{thmribbon} that may be of independent interest, namely the answer to the question how 
 many \emph{other}
 ribbon Grothendieck-Verdier dualities exist on a ribbon Grothendieck-Verdier category $\cat{A}$
  while keeping the underlying
 balanced braided category fixed.
 In other words,
 we would like to understand
 the set
 $\catf{RibGV}(\cat{A})$
 of equivalence classes of ribbon Grothendieck-Verdier structures on the underlying balanced braided category of $\cat{A}$. This is a  pointed set with the class of $\cat{A}$ being the base point.
 It turns out that we can describe $\catf{RibGV}(\cat{A})$, and actually much more than that,
  through 
  an exact sequence featuring the Picard group
 $\catf{Pic}(Z_2^\catf{bal}(\cat{A}))$ of the balanced Müger center
 $Z_2^\catf{bal}(\cat{A})$ of $\cat{A}$. The latter is defined 
 as full subcategory of $\cat{A}$
 spanned by the objects that trivially double braid with every other object and have trivial balancing. 
 This exact sequence is the following (we explain the notation after the statement of the result):
 
 \spaceplease
 \begin{reptheorem}{thmexactseq}
 	For any ribbon Grothendieck-Verdier category $\cat{A}$ in $\Lexf$, there is an exact sequence
 	\begin{center}
 		\begin{equation}\centering
 			\hspace*{-3cm}\begin{tikzcd}
 				1\ar[rrr] &&&
 				\catf{cAut}_{\otimes}(\id_\cat{A}) 	\ar[rrr]
 				&&& \catf{Aut}_{\otimes}(\id_\cat{A})    \ar[out=355,in=175,dllllll]    \\
 				\catf{Aut}_\cat{A}(I) 	 \ar[rrr] &&&  \catf{cAut}_{\framed}(\cat{A}) \ar[rrr] &&& \catf{Aut}_{\framed}(\cat{A}) \ar[out=355,in=175,dllllll] \\ \catf{Pic}(Z_2^\catf{bal}(\cat{A}))   \ar[rrr] &&&   \catf{RibGV}(\cat{A}  ) \ar[rrr] &&& 1
 			\end{tikzcd} 
 	\end{equation}\end{center}
 	of groups, except for the last stage where it is an exact sequence of pointed sets. 
 \end{reptheorem}

 Here $\catf{Aut}_{\framed}(\cat{A})$ is the group of isomorphism classes of balanced braided automorphisms of $\cat{A}$, and $\catf{Aut}_{\otimes}(\id_\cat{A})$ is the group of monoidal isomorphisms of the identity of $\cat{A}$ 
 while $\catf{cAut}_{\framed}(\cat{A})$ and $\catf{cAut}_{\otimes}(\id_\cat{A})$ are cyclic analogs of these groups that are
 additionally compatible with the Grothendieck-Verdier duality, see Section~\ref{seccyclicstructures} for details.

 As an application of Theorem~\ref{thmexactseq}, we prove the following statement which is the key result for proving the uniqueness statement contained in Theorem~\ref{thmribbon}:
 
 \begin{repcorollary}{corcyclic}
 		For a balanced braided category $\cat{A}$ in $\Lexf$ with non-degenerate braiding, there is, up to equivalence, at most one extension to a ribbon Grothendieck-Verdier structure. More precisely, the space of choices is empty or $B\Aut_\cat{A}(I)$. The latter is $Bk^\times$ if the unit of $\cat{A}$ is simple.	
 \end{repcorollary}
	
	\vspace*{0.2cm}\textsc{Acknowledgments.} We thank
	Adrien Brochier, 	
	Peter Schauenburg,	
	Christoph Schweig\-ert and
	Nathalie Wahl 
	for  helpful discussions related to this project, and also 
	the anonymous referee whose comments have helped us improve the manuscript.
	LM gratefully acknowledges support by the Max Planck Institute for Mathematics in Bonn,
	 where part of this work was carried out,
	  and the Simons Collaboration on Global Categorical Symmetries. Research at Perimeter Institute is supported in part by the Government of Canada through the Department of Innovation, Science and Economic Development and by the Province of Ontario through the Ministry of Colleges and Universities. The Perimeter Institute is in the Haldimand Tract, land promised to the Six Nations.
	LW gratefully acknowledges support by 
the Danish National Research Foundation through the Copenhagen Centre for Geometry
and Topology (DNRF151)
at K\o benhavns Universitet during the period in which this project was initiated. 
LW also gratefully acknowledges support
 by the ANR project CPJ n°ANR-22-CPJ1-0001-01 at the Institut de Mathématiques de Bourgogne (IMB).
The IMB receives support from the EIPHI Graduate School (contract ANR-17-EURE-0002).

\section{The ribbon Grothendieck-Verdier structure on the Drinfeld center of a pivotal finite tensor category}
For an algebraically closed field $k$ that we fix throughout, let $\cat{C}$ be a finite tensor category over $k$
	 with pivotal structure, i.e.\ a monoidal isomorphism $\omega : \id_\cat{C} \to -^{\vee \vee}$. 
Then the Drinfeld center $Z(\cat{C})$ is 
 a finite tensor category as well, and it   comes again 
with a pivotal structure. By abuse of notation we denote it again by $\omega$.
Just like any braided category, $Z(\cat{C})$ comes with the \emph{Joyal-Street equivalence} $J:Z(\cat{C})\to Z(\cat{C})$ \cite{joyalstreet2} which is the monoidal functor whose underlying functor is the identity with
the monoidal structure 
 given by the double braiding. 
A balancing on $Z(\cat{C})$ is a  monoidal isomorphism $\id_{Z(\cat{C})} \cong J$ (it is easy to see that this amounts exactly to the definition of a balancing in the introduction).
As any pivotal braided category, $Z(\cat{C})$ comes with a canonical balancing; we will recall the formula for the balancing in~\eqref{eqnpivrigid}.

We will now proceed with some more background on the distinguished invertible object $\alpha \in \cat{C}$ of a finite tensor category $\cat{C}$ and the Radford isomorphism from~\cite[Theorem~3.3]{eno-d}
\begin{align}
\label{radfordiso} -^{\vee\vee\vee\vee} \cong \alpha \otimes - \otimes \alpha^{-1} \ , 
\end{align}
the monoidal isomorphism
that was mentioned in the introduction.
The Radford isomorphism gives rise to a variety of 
monoidal isomorphisms which are important in what follows. Using that $\alpha$ is invertible we
get monoidal isomorphisms  
\begin{align}
\delta^+ \colon \alpha \otimes -   \cong  -^{\vee \vee \vee \vee } \otimes \alpha 
\end{align}
and 
\begin{align}
\delta^- \colon - \otimes \alpha^{-1} \cong \alpha^{-1} \otimes -^{\vee \vee \vee \vee } \ \ .
\end{align}
A finite tensor category with $\alpha\cong I$ is called \emph{unimodular}.
If $\cat{C}$ is unimodular and pivotal, we obtain two trivializations
of $-^{\vee\vee\vee\vee}$, namely one from $\alpha \cong I$ and one from the square of the pivotal structure.
If these agree, the unimodular pivotal category $\cat{C}$ is called \emph{spherical} \cite[Definition~3.5.2]{dsps}.
As explained in \cite[Section~3.5.2]{dsps}, this notion of sphericality is related to the sphericality in terms of quantum traces only in the semisimple case. Beyond semisimplicity, it is an entirely different notion. 
Most remarkably, sphericality, even in the non-semisimple situation, can be perfectly characterized through traces if one uses modified traces instead of quantum traces \cite[Theorem~1.3]{shibatashimizu}.

There is a very profitable perspective on the Radford 
isomorphism based on tools developed by Fuchs-Schaumann-Schweigert
in \cite{fss} 
 that we will heavily rely on: For any finite linear category $\cat{C}$, one can define the \emph{right Nakayama functor} $\nakar : \cat{C}\to\cat{C}$
  via a coend $\nakar := \int^{X\in\cat{C}} \cat{C}(-,X)^* \otimes X$ and the \emph{left Nakayama functor} $\nakal:\cat{C}\to\cat{C}$
   via an end $\nakal := \int_{X\in \cat{C}} \cat{C}(X,-) \otimes X$. If $\cat{C}$ is a finite tensor category, then these (co)ends can be expressed through the distinguished invertible object and the double dual functor. More precisely, we have canonical natural isomorphisms \cite[Lemma~4.10]{fss}
\begin{align}
	\nakar \cong \alpha^{-1} \otimes -^{\vee\vee} \ , \quad \nakal \cong -^{\vee \vee} \otimes \alpha \ , \label{eqnnakafunctorsftc}
	\end{align}
where $\alpha := \nakal I$ is invertible with inverse $\alpha^{-1} = \nakar I$. Here we use the same notation `$\alpha$'
and `$\alpha^{-1}$' that we have used for the 
distinguished object and its inverse, respectively, 
because these agree indeed~\cite[Remark~4.13]{fss}. 
This allows for a very elegant derivation of the Radford isomorphism~\eqref{radfordiso}:
By \cite[Corollary~4.7]{fss} the Nakayama functors $\nakar$ and $\nakal$ of a finite tensor category
 are weakly inverse equivalences, and we agree to equip them with natural isomorphisms
$\nakal \nakar \cong \id_\cat{C}$ and $\nakar \nakal \cong \id_\cat{C}$ making them \emph{adjoint} equivalences $\nakar \dashv \nakal$. 
Now~\eqref{eqnnakafunctorsftc} gives us the natural isomorphism
\begin{align}
\delta: 	\id_\cat{C} \cong \alpha^{-1} \otimes -^{\vee\vee\vee\vee} \otimes \alpha\label{eqndelta}
	\end{align}
that can also be written in the form~\eqref{radfordiso}.

Combining the pivotal structure with the Radford isomorphism allows us to consider $\alpha$
as an element of $Z(\cat{C})$:

\begin{lemma}\label{lemmahalfbraiding}
	For a pivotal finite tensor category $\cat{C}$,
	the natural isomorphism $\sigma : \alpha \otimes -\cong - \otimes \alpha$
	whose component at $X\in\cat{C}$ is defined by 
		\begin{equation}\label{defhalfbraiding}
\sigma_X \colon \alpha \otimes X \ra{\delta^+_X} X^{\vee\vee\vee\vee}  \otimes  \alpha \ra{{\omega_X^2}^{-1}} X\otimes \alpha 
	\end{equation}
	equips the distinguished invertible object
	 $\alpha$ with a half braiding, i.e.\ $\alpha$ can be seen as object in the Drinfeld center $Z(\cat{C})$.
	With this half braiding, we have $\alpha \cong I$ in $Z(\cat{C})$ if and only if $\cat{C}$ is spherical. In a similar way, $\delta^-$ can be used to equip $\alpha^{-1}$ with a half braiding. These
	two objects in $Z(\cat{C})$ are inverse to each other
	with respect to the monoidal product 
	in $Z(\cat{C})$.
	\end{lemma}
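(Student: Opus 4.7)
I would organise the proof as a sequence of four short verifications, each of which is essentially a diagram chase once the right abstract perspective is set up.

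The first and main point is to recognise that a half braiding on $\alpha$ is the same thing as a monoidal natural isomorphism between the two monoidal functors $\alpha\otimes - $ and $-\otimes \alpha$ from $\cat{C}$ to itself (the hexagon axiom is literally the monoidality condition, and the unit normalisation is automatic). From this perspective, $\delta^+\colon \alpha\otimes - \Rightarrow -^{\vee\vee\vee\vee}\otimes\alpha$ is monoidal by construction (it comes from the Radford isomorphism, which is a monoidal natural isomorphism), and $\omega^2\colon \id_\cat{C}\Rightarrow -^{\vee\vee\vee\vee}$ is monoidal because the pivotal structure is. Hence whiskering $(\omega^2)^{-1}$ by $\alpha$ on the right and composing with $\delta^+$ yields the monoidal natural isomorphism $\sigma$, which is therefore a half braiding on $\alpha$. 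The construction for $\alpha^{-1}$ via $\delta^-$ is verbatim the same with $\delta^-$ in place of $\delta^+$.

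For the characterisation of $\alpha\cong I$ in $Z(\cat{C})$, I would start from the natural isomorphism $\delta\colon \id_\cat{C}\cong \alpha^{-1}\otimes -^{\vee\vee\vee\vee}\otimes\alpha$ of~\eqref{eqndelta}: once one has chosen an isomorphism $\phi\colon \alpha\cong I$ in $\cat{C}$, the conjugation by $\phi$ turns $\delta$ into a monoidal trivialisation $\id_\cat{C}\cong -^{\vee\vee\vee\vee}$. Next, unwinding the definition of $\sigma$ shows that the compatibility of $\phi$ with the half braiding (i.e.\ that $\phi$ is a morphism in $Z(\cat{C})$ from $(\alpha,\sigma)$ to the unit) is equivalent to requiring that this trivialisation coincides with $\omega^2$. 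That is exactly the definition of sphericity from \cite{dsps}. Hence $\alpha\cong I$ in $Z(\cat{C})$ if and only if $\cat{C}$ is unimodular and the resulting trivialisation agrees with $\omega^2$, i.e.\ if and only if $\cat{C}$ is spherical.

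For the final claim, since both half braidings are built from monoidal natural isomorphisms of the same flavour, the tensor product $(\alpha,\sigma)\otimes (\alpha^{-1},\sigma^{-1})$ in $Z(\cat{C})$ has underlying object $\alpha\otimes\alpha^{-1}\cong I$, and the induced half braiding is obtained by tensoring the two prescriptions; the $\omega^2$-factors cancel, and the tensor product of $\delta^+$ and $\delta^-$ restricted to $\alpha\otimes\alpha^{-1}$ collapses to the identity after using $\delta^+$ and $\delta^-$ come from the same invertible Radford isomorphism. Thus $\alpha^{-1}$ is genuinely the inverse of $\alpha$ in $Z(\cat{C})$. The main obstacle throughout is purely bookkeeping: one must be careful to track how the monoidal structures of $\delta^\pm$ and $\omega$ interact with associators and with the invertibility of $\alpha$, and—most importantly in step two—to translate cleanly between "compatibility of $\alpha\cong I$ with the half braiding" and "the two trivialisations of $-^{\vee\vee\vee\vee}$ agree", which is precisely the sphericity condition of Douglas--Schommer-Pries--Snyder.
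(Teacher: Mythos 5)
Your proof is correct and follows the same route as the paper's: deduce the hexagon for $\sigma$ from the naturality and monoidality of $\delta^+$ and $\omega$, read the sphericity criterion off as the statement that a trivialization $\alpha\cong I$ in $\cat{C}$ is compatible with $\sigma$ precisely when the two trivializations of $-^{\vee\vee\vee\vee}$ agree, and observe the inverse statement directly. One small caveat on phrasing: $\alpha\otimes-$ and $-\otimes\alpha$ are not monoidal endofunctors of $\cat{C}$ (a monoidal structure on $\alpha\otimes-$ would already presuppose a half braiding on $\alpha$), so ``a half braiding on $\alpha$ is the same as a monoidal natural isomorphism between the monoidal functors $\alpha\otimes-$ and $-\otimes\alpha$'' is not literally correct; what you actually need, and then correctly use, is only the hexagon condition $\sigma_{X\otimes Y}=(\id_X\otimes\sigma_Y)\circ(\sigma_X\otimes\id_Y)$ together with $\sigma_I=\id_\alpha$, and these follow from the tensor-compatibility of $\delta^+$ (coming from the monoidality of the Radford isomorphism) and of $\omega^2$, without ever endowing $\alpha\otimes-$ itself with a monoidal structure.
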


In the second step in~\eqref{defhalfbraiding}, the second isomorphism is of course not just ${\omega_X^2}^{-1}$, but it is tensored with an identity, in this case of $\alpha$.  
This is suppressed here and in similar cases below for readability.

\begin{proof}
	Since $\delta^+$ and $\omega$ are natural, so is $\sigma$.
	The monoidality of the transformations $\delta^+$ and $\omega$ tells us that $\sigma$ is in fact a half braiding.

By slightly rephrasing the definition of sphericality, $\cat{C}$ is spherical if and only if $\alpha \cong I$ in $\cat{C}$, and if under this isomorphism, $\sigma$ coincides with the unit constraint of the monoidal unit. But this is equivalent to $\alpha \cong I$ in $Z(\cat{C})$, where $\alpha$ is equipped with the half braiding $\sigma$.  

The remark on $\alpha^{-1}$ with its half braiding being $\otimes$-inverse to $\alpha$ in $Z(\cat{C})$ is clear.
\end{proof}

\begin{example}\label{exgraded}	
	Let $G$ be a finite group and $\lambda$ a normalized 3-cocycle on $G$ with values in $k^\times$ (the group of units of $k$), and let $\Vect_{G}^\lambda$
	be the category of $G$-graded finite-dimensional vector spaces with the usual monoidal structure and the associator 
	that on homogeneous objects $V_g$ (which are given by a finite-dimensional vector space $V$ in degree $g\in G$) 
	is given by
	\begin{align}
		\alpha \colon (V_g\otimes V_{g'})\otimes V_{g''} & \longrightarrow V_g \otimes (V_{g'}\otimes V_{g''}) \\
		(v_g\otimes v_{g'})\otimes v_{g''} &  \longmapsto \lambda(g,g',g'') \cdot v_g \otimes (v_{g'}\otimes v_{g''})
	\end{align} We refer to \cite[Example~2.3.8]{egno} for a textbook reference for this example.
	The dual of an object $V_g$ is given by $V^*_{g^{-1}}$. We identify the double dual functor with the 
	identity using the canonical isomorphism $V^{**}\cong V$ between a finite-dimensional vector space $V$ and its bidual. 
	Pivotal structures on $\Vect_{G}^\lambda$ are
	therefore monoidal automorphisms of the identity functor.
	They are equivalent to   	group homomorphisms $d \colon G \to k^\times$; this follows essentially from~\cite[Example~1.7.3]{TV}.
The  pivotal structure corresponding to a group morphisms $d: G \to k^\times$
	is explicitly given by
	\begin{align}
		\omega_{V_g}^{(d)} \colon V_g & \to V_g \\
		v & \longmapsto d(g)\cdot \lambda(g,g^{-1},g) \cdot v \ \ . \label{eqnpivotalstructurefromd}
	\end{align} 
	We denote by $\Vect_{G}^{\lambda,d}$ the finite tensor category $\Vect_{G}^\lambda$ with the pivotal structure defined by a group morphism
$d: G\to k^\times$ via~\eqref{eqnpivotalstructurefromd}.
	For $\Vect_{G}^\lambda$ the distinguished invertible object and Radford isomorphism are trivial, i.e.\  $\alpha=I=k_{e}$, where $e\in G$ is the neutral element. In fact, the distinguished invertible object coincides with the monoidal unit for every semisimple finite tensor category \cite[Corollary~6.4]{eno-d}.
	
	An element in the Drinfeld center $Z(\Vect_{G}^{\lambda,d})$ consists of a $G$-graded vector
	space $V=\bigoplus_{g\in G} V_g$ together with a half braiding $\sigma_{V,-}$ which is completely determined by
	its components on simple objects
	\begin{align}
	\sigma_{V,h}\colon \bigoplus_{g\in G} V_g \otimes k_h   \longrightarrow \bigoplus_{g\in G} k_h \otimes V_g \ \ .
	\end{align} 
    The isomorphism $\sigma_{V,h}$ can be described by a family
    of isomorphisms $\sigma_{V,h}^g \colon V_g \longrightarrow V_{h^{-1}gh}$ satisfying 
    \begin{align}\label{sigmavhheqn}
    \sigma_{V,hh'}^g=  \lambda(h,h',(hh')^{-1}ghh')^{-1} \lambda(h,h^{-1}gh,h') \lambda(g,h,h')^{-1} \cdot \sigma_{V,h'}^{h^{-1}gh} \circ \sigma_{V,h}^g \ \ .
    \end{align} 	
In the case that $\lambda$ is trivial,
 this condition is equivalent to the maps $\sigma_{V,h}$ defining a representation
of $G$ on $V$. Objects of this type are also known as $G$-Yetter-Drinfeld modules~\cite[Chapter~IX]{kassel}. 
	The object $(\alpha,\sigma) \in Z(\Vect_{G}^{\lambda,d})$, with $\sigma$ being the half braiding from Lemma~\ref{lemmahalfbraiding}, 
	has as underlying object the monoidal unit $I$,
	and the only non-zero components of the half braiding are 
	$\sigma_{I,h}^{e}= (\omega^{(d)}_{k_h})^2$, 
	where $h\in G$ and 
	$(\omega^{(d)})^2$ is the square of the pivotal structure associated to $d:G\to k^\times$. 
	In case $\lambda$ is trivial,
	we conclude that $\Vect_{G}^{\lambda,d}$ with the pivotal structure coming from $d$ is spherical if and only if $d^2$ is trivial.
\end{example}

\begin{lemma}\label{lemmachi}
	Let $F:\cat{C}\to\cat{C}$ be a monoidal autoequivalence of a finite tensor category $\cat{C}$, then the structure isomorphism $\phi_0^F : I\to FI$ of $F$ (that is part of the data of the monoidal functor $F$)
	 induces isomorphisms $\chi^F: \alpha \to F\alpha$
	 and $\bar \chi^F : \alpha^{-1} \to F\alpha^{-1}$
	  for the distinguished invertible object $\alpha$ of $\cat{C}$. 
	\end{lemma}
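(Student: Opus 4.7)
My proof plan is to reduce the construction to a single application of the Nakayama-functor description of the distinguished invertible object recalled in~\eqref{eqnnakafunctorsftc}, namely $\alpha = \nakal I$ and $\alpha^{-1} = \nakar I$. The crucial ingredient is the purely linear-categorical fact that the Nakayama functors are natural with respect to $k$-linear equivalences: for any $k$-linear equivalence $F \colon \cat{C} \to \cat{C}$ between finite linear categories there are canonical natural isomorphisms $F \nakal \cong \nakal F$ and $F \nakar \cong \nakar F$. This can be verified directly from the defining end and coend by substituting $X \mapsto FX$ as the variable of (co)integration and using that the equivalence $F$ induces isomorphisms on all morphism spaces and preserves the relevant (co)limits; no monoidal structure is required for this step.

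Granted this, the constructions of $\chi^F$ and $\bar\chi^F$ are immediate. I would define $\chi^F$ as the composite
\begin{align}
\chi^F \colon \alpha = \nakal I \ra{\nakal(\phi_0^F)} \nakal FI \ra{\cong} F \nakal I = F\alpha,
\end{align}
and analogously $\bar\chi^F$ via $\nakar$, i.e.\
\begin{align}
\bar\chi^F \colon \alpha^{-1} = \nakar I \ra{\nakar(\phi_0^F)} \nakar FI \ra{\cong} F \nakar I = F\alpha^{-1}.
\end{align}
Both are isomorphisms: $\phi_0^F$ is invertible because $F$ is a strong monoidal autoequivalence, $\nakal$ and $\nakar$ preserve isomorphisms as functors, and the coherence natural isomorphism between $F$ and the Nakayama functors is an isomorphism by construction.

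I do not anticipate a substantial obstacle. The only delicate point is to set up the natural isomorphism $F \nakal \cong \nakal F$ carefully so that later naturality and coherence statements involving $\chi^F$ (which will presumably be needed in subsequent sections) follow without sign or convention issues, but this is standard within the Nakayama-functor toolkit of~\cite{fss} that the paper already leans on. Everything else is formal.
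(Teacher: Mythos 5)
Your construction agrees with the paper's, and in particular your composite $\alpha=\nakal I\xrightarrow{\nakal(\phi_0^F)}\nakal FI\xrightarrow{\cong}F\nakal I=F\alpha$ is exactly the ``concrete description'' of $\chi^F$ that the authors themselves record in Remark~\ref{remdescribechi}: there, $F\nakal I$ is rewritten as $\int_{X\in\cat{C}}\cat{C}(X,I)\otimes FX$ using exactness of $F$ and then, after the substitution $Y=FX$, as $\int_{Y\in\cat{C}}\cat{C}(Y,FI)\otimes Y=\nakal(FI)$, so that $\chi^F$ is induced by $\phi_0^F\colon I\to FI$. The only real difference is how the commutation of $\nakal$ with $F$ is supplied. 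The paper's proof of Lemma~\ref{lemmachi} arranges $F^{-1}$ to be left adjoint to $F$ and quotes \cite[Theorem~3.18~(i)]{fss}, a twisted naturality statement for the Nakayama functor relative to adjoint pairs, to obtain $\nakal\cong F\nakal F$ and then evaluates at $I$ and composes with $F\nakal(\phi_0^{-1})$. You instead prove $F\nakal\cong\nakal F$ directly from the end formula by reindexing along the self-equivalence $F$ and using full faithfulness together with preservation of ends and copowers, which is precisely the substitution step of Remark~\ref{remdescribechi}. Both routes yield the same $\chi^F$ (and likewise $\bar\chi^F$ via $\nakar$), and your version is somewhat more self-contained, correctly isolating that the monoidal structure of $F$ enters only through the unit constraint $\phi_0^F$.
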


\begin{proof}
	Let $F^{-1}$ be the weak inverse of $F$ (just like $F$, it is a monoidal autoequivalence of $\cat{C}$). We can arrange $F^{-1}$ to be left adjoint to $F$. Clearly, $F$ and $F^{-1}$ are both exact because they are equivalences.
	Now \cite[Theorem~3.18~(i)]{fss} gives us a canonical isomorphism
	$\zeta : \nakal F \cong F \nakal$ (this uses the adjunction). Evaluation at $I$ yields a canonical isomorphism
	\begin{align}
		\zeta_I: \nakal FI \ra{\cong} F\alpha \ . 
		\end{align}
	Before proceeding with the proof, let us extract from the proof of~\cite[Theorem~3.18~(i)]{fss} where this isomorphism comes from:
	There is a canonical isomorphism
	\begin{align}
		F\alpha \cong   \int_{X \in \cat{C}} \cat{C}(X,I)\otimes FX \cong    \int_{Y\in\cat{C}} \cat{C}(Y,FI) \otimes Y  \ ,     \label{eqnforchi}
	\end{align}
	where
	\begin{itemize}
		\item for the first isomorphism, we use that $F$, by virtue of being an equivalence and therefore in particular exact, preserves the end and, by linearity, the tensoring with a vector space,
		\item and for the second isomorphism, we perform a variable substitution $Y=FX$ in the end.
	\end{itemize}
	The left hand side of~\eqref{eqnforchi} is $F \nakal I$ while the right hand side is $\nakal FI$.
	An inspection of the proof of~\cite[Theorem~3.18~(i)]{fss} (which simplifies of course if $F$ is an equivalence) reveals that this isomorphism is the component $\zeta_I^{-1}: F\alpha = F \nakal I\cong \nakal FI$.
	
	With the isomorphism $\zeta$ at hand, $\chi^F$ can be obtained as
	\begin{align}\chi^F:\alpha = \nakal I 
		\stackrel{\nakal \phi_0   }{\cong} \nakal FI \stackrel{\zeta_I   }{\cong} F \nakal I = F\alpha \ .   \label{eqnchiF} \end{align}
	By replacing $\nakal$ with $\nakar$ we obtain a similar isomorphism for $\alpha^{-1}$ instead of $\alpha$. 
	\end{proof}

\begin{lemma}\label{lemmachi2}
	Let $F:\cat{C}\to\cat{C}$ be a monoidal autoequivalence of a finite tensor category $\cat{C}$ and $\lambda : \id_\cat{C}\to F$ a monoidal isomorphism.
	Then $\lambda_\alpha = \chi^F : \alpha \to F\alpha$
	and $\lambda_{\alpha^{-1}} = \bar \chi^F:\alpha^{-1}\to F\alpha^{-1}$,
	where $\chi^F$ and $\bar \chi^F$ are from Lemma~\ref{lemmachi}.
	\end{lemma}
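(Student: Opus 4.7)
I would prove the stronger naturality statement that for any monoidal natural transformation $\mu: F \to G$ between monoidal autoequivalences of $\cat{C}$, one has $\mu_\alpha \circ \chi^F = \chi^G$ (and analogously for $\bar\chi$). Together with $\chi^{\id_\cat{C}} = \id_\alpha$ --- which holds because the identity monoidal functor has trivial unit constraint $\phi_0 = \id_I$, so every step in the construction of Lemma~\ref{lemmachi} is an identity --- specializing this naturality to $F = \id_\cat{C}$, $G = F$ and $\mu = \lambda$ yields $\lambda_\alpha = \chi^F$, and the same argument gives $\lambda_{\alpha^{-1}} = \bar\chi^F$.

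To establish the naturality of $\chi^F$ in $F$, I would use the explicit description from Remark~\ref{remdescribechi}: under the identification $F\alpha = \int_{Y\in\cat{C}} \cat{C}(Y, FI) \otimes Y$ (and similarly for $G$), $\chi^F$ is the map induced, componentwise in the end, by post-composition with $\phi_0^F : I \to FI$. Under the same identifications, $\mu_\alpha: F\alpha \to G\alpha$ corresponds to post-composition with $\mu_I: FI \to GI$. Monoidality of $\mu$ at the unit (the compatibility of $\mu$ with the unit constraints of $F$ and $G$) gives $\mu_I \circ \phi_0^F = \phi_0^G$, and the universal property of the end then yields $\mu_\alpha \circ \chi^F = \chi^G$. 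The case of $\bar\chi^F$ is parallel, with the coend description $\alpha^{-1} = \nakar I = \int^{X\in\cat{C}} \cat{C}(-,X)^* \otimes X$ replacing the end description of $\alpha$.

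It is worth recording as a separate observation the consequence of monoidality that $\lambda_I = \phi_0^F$: this is the special case of the previous paragraph at $F = \id_\cat{C}$, $G = F$, $\mu = \lambda$ evaluated at the unit, and it is precisely the input needed for the comparison above.

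The main obstacle I anticipate is making the dinatural comparison at the level of end projections sufficiently rigorous to identify $\mu_\alpha$ with the map induced by $\mu_I$ under the (non-obvious) identification in Remark~\ref{remdescribechi} that substitutes the integration variable via the equivalence $F$. This should come down to carefully tracing the adjunction data entering the FSS isomorphism $\nakal F \cong F^{-1}\nakal$ from \cite[Theorem~3.18]{fss} that underlies the construction of $\chi^F$ in Lemma~\ref{lemmachi}.
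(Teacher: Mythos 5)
Your proposal is correct and rests on the same key ingredients as the paper's proof: the end description of $\chi^F$ from Remark~\ref{remdescribechi}, the identification of $\lambda_\alpha$ (resp.\ $\mu_\alpha$) as a map of ends induced by naturality, the variable substitution coming from the equivalence, and the fact that monoidality pins down $\lambda_I = \phi_0^F$. The difference is one of framing: you propose to first prove the stronger naturality $\mu_\alpha\circ\chi^F = \chi^G$ for an arbitrary monoidal $\mu\colon F\to G$, and then specialize to $F=\id_\cat{C}$, whereas the paper computes $\lambda_\alpha$ directly. The direct computation is slightly more economical, precisely because taking $F=\id_\cat{C}$ makes the variable substitution on the source side trivial, so one only has to juggle a single substitution (the one for $G\alpha$); carrying out your general statement for both $F$ and $G$ duplicates exactly the step you flag as the main obstacle. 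Also, the worry you raise about re-tracing the FSS adjunction behind $\nakal F\cong F^{-1}\nakal$ is unnecessary here: Remark~\ref{remdescribechi} has already translated $\chi^F$ into the purely end-theoretic statement "post-composition with $\phi_0^F$," so the proof can (and in the paper does) run entirely at that level without reopening \cite[Theorem~3.18]{fss}.
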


\begin{proof}
	When postcomposed with 
	 the isomorphism $F\alpha\cong\int_ {X\in\cat{C}}\cat{C}(X,I)\otimes FX$ (this is the first isomorphism in~\eqref{eqnforchi}), the morphism $\lambda_\alpha:\alpha \to F\alpha$ is given by
	\begin{align}
		 \int_{X\in\cat{C}} \cat{C}(X,I)\otimes X \ra{\int_{X\in\cat{C}}\cat{C}(X,I) \otimes \lambda_X }   \int_{X\in\cat{C}} \cat{C}(X,I)\otimes FX \ .     \label{eqnlambdaunderend}
		\end{align}
	This is a consequence of the  naturality of $\lambda$. 
	After the variable substitution $Y=FX$ for the end, we obtain an isomorphism
	 $\int_{X\in\cat{C}} \cat{C}(X,I)\otimes FX\cong \int_{Y\in\cat{C}} \cat{C}(Y,FI)\otimes Y$ (this is the second isomorphism in~\eqref{eqnforchi}).
	 After postcomposing~\eqref{eqnlambdaunderend} with this isomorphism, 
	 we obtain, again by naturality and the universal property of the end,
	 the map 
	 $\nakal(\lambda_I):\int_{X\in\cat{C}} \cat{C}(X,I)\otimes X \cong \int_{X\in\cat{C}} \cat{C}(X,FI)\otimes X$ induced by 
	  $\lambda_I:I\to FI$.
	  But the two maps that we have postcomposed $\lambda_\alpha$ with combine into $\zeta_I^{-1}: F\alpha = F \nakal I\cong \nakal FI$ as was explained in the proof of Lemma~\ref{lemmachi}. Hence, we conclude so far that
	  $\zeta_I^{-1} \circ \lambda_\alpha = \nakal(\lambda_I)$ or, equivalently, $\lambda_\alpha = \zeta_I \circ \nakal(\lambda_I)$.
	  Moreover, $\lambda_I= 
	  \phi_0^F:I\to FI$ by the assumption that $\lambda$ is monoidal. 
	  This leaves us with
	  \begin{align}\lambda_\alpha = \zeta_I \circ \nakal(\phi_0^F) \stackrel{\eqref{eqnchiF}}{\cong} \chi^F \ . 
	  	\end{align} A similar argument applies to $\bar \chi^F$.
	\end{proof}

In order to state the next result, we need the following notion: A \emph{pivotal structure}~\cite[Section~6, in particular Proposition~6.7]{bd} 
on a monoidal category $\cat{B}$ with Grothendieck-Verdier duality $D$ is a 
monoidal isomorphism $\xi :  \id_\cat{B} \to D^2$ such that the component 
$\xi_I : I \to D^2 I$ agrees with the canonical isomorphism $I\cong D^2I$ from \cite[Remark~2.1~(4)]{bd}.

\begin{proposition}[]\label{proppivotal}
	The Drinfeld center $Z(\cat{C})$ of a pivotal finite tensor category $\cat{C}$
	 comes with a Grothendieck-Verdier duality $D=  -^\vee \otimes \alpha$. This Grothendieck-Verdier duality comes with a pivotal structure that at $X\in Z(\cat{C})$ is given by
	\begin{align}
\xi_X: 	
X \ra{\delta_X\ \text{from}\ \eqref{eqndelta}} \alpha^{-1} \otimes X^{\vee \vee \vee \vee} \otimes \alpha \ra{\omega_{{X^{\vee\vee}}}^{-1}}   \alpha^{-1} \otimes X^{\vee \vee} \otimes \alpha =D^2 X \ . 
\end{align}
	\end{proposition}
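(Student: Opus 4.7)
The plan is to verify the two claims in turn: first that $D = -^\vee \otimes \alpha$ is a Grothendieck-Verdier duality with dualizing object $\alpha \in Z(\cat{C})$ (equipped with the half braiding from Lemma~\ref{lemmahalfbraiding}), and second that the prescribed formula gives a pivotal structure. For the first part, I would use that $\alpha$ is invertible in $Z(\cat{C})$ (again by Lemma~\ref{lemmahalfbraiding}) and that $Z(\cat{C})$ is rigid as a finite tensor category, so that for all $X, Y \in Z(\cat{C})$ the natural isomorphisms
\begin{align}
Z(\cat{C})(\alpha, X \otimes Y) \cong Z(\cat{C})(I, \alpha^{-1} \otimes X \otimes Y) \cong Z(\cat{C})(X^\vee \otimes \alpha, Y)
\end{align}
(using $(\alpha^{-1})^\vee \cong \alpha$ for the invertible object) exhibit $X^\vee \otimes \alpha$ as representing $Z(\cat{C})(\alpha, X\otimes -)$, yielding $DX = X^\vee \otimes \alpha$. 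Since this is the composition of rigid duality with tensoring by an invertible object, $D:Z(\cat{C}) \to Z(\cat{C})^\opp$ is automatically an equivalence, so $\alpha$ is dualizing.

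Next I would compute $D^2 X = (X^\vee \otimes \alpha)^\vee \otimes \alpha \cong \alpha^{-1} \otimes X^{\vee\vee} \otimes \alpha$, so the target in the statement is $D^2 X$ up to canonical isomorphism. The map $\xi_X$ is visibly an isomorphism since both constituent maps are, and both live in $Z(\cat{C})$: the Radford isomorphism $\delta_X$ is a morphism in the Drinfeld center because $\delta$ is monoidal and hence lifts to $Z(\cat{C})$ with the half braiding on $\alpha$ constructed from $\delta^+$ in Lemma~\ref{lemmahalfbraiding}, while $\omega^{-1}_{X^{\vee\vee}}$ is a morphism in $Z(\cat{C})$ because the pivotal structure of $\cat{C}$ lifts to one on $Z(\cat{C})$. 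Naturality of $\xi$ in $X$ is immediate from naturality of $\delta$ and $\omega$.

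The genuine work, and the main obstacle, lies in verifying that $\xi$ is \emph{monoidal}, i.e.\ that the relevant hexagon commutes with respect to the monoidal structures on $\id_{Z(\cat{C})}$ and on $D^2$. Here the monoidal structure on $D^2 = \alpha^{-1} \otimes (-)^{\vee\vee} \otimes \alpha$ comes from the contravariant monoidal structure on rigid duality applied twice, combined with the half braiding on $\alpha$ (and $\alpha^{-1}$) which is precisely what allows the outer tensor factors of $\alpha$ to be moved past inner objects. The check then decomposes: monoidality of the first factor $\delta_X$ is the Radford monoidality statement encoded by $\delta$ arising from the adjoint equivalence $\nakar \dashv \nakal$ of monoidal functors (compare Lemmas~\ref{lemmachi} and~\ref{lemmachi2}), and monoidality of the second factor is exactly the pivotal axiom for $\omega$. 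Combining these and tracing through the two half braidings on $\alpha^{\pm 1}$ built in~\eqref{defhalfbraiding} then yields the desired coherence. The identification of $D^2$ with $\alpha^{-1} \otimes (-)^{\vee\vee} \otimes \alpha$ up to the natural monoidal structure is routine; the careful bookkeeping of $\sigma$ and its inverse when comparing $\xi_{X\otimes Y}$ with $\xi_X \otimes \xi_Y$ is where the argument becomes delicate.
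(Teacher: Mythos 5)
Your argument for the first claim (that $D = -^\vee \otimes \alpha$ is a Grothendieck-Verdier duality) matches the paper's proof and is fine. But for the second claim there is a genuine gap: you treat ``pivotal structure for the Grothendieck-Verdier duality'' as synonymous with ``monoidal natural isomorphism $\id_{Z(\cat{C})} \to D^2$,'' and your whole plan is to verify monoidality. That is necessary, but \emph{not sufficient}. In the rigid setting a pivotal structure is indeed just a monoidal isomorphism $\id \to (-)^{\vee\vee}$, and the unit normalization $\omega_I = \id$ (after the canonical identification $I^{\vee\vee}\cong I$) is automatic from monoidality. In the Grothendieck-Verdier setting this is no longer true, because the relevant distinguished object is the dualizing object $K=\alpha$, not the unit $I$: a pivotal structure is a monoidal isomorphism $\id \to D^2$ whose component at $K$ agrees with the canonical isomorphism $K \to D^2 K$. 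This extra normalization is a genuine condition, not a consequence of monoidality.

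The paper's proof therefore has a second step that your proposal omits entirely: after proving $\xi$ is a monoidal isomorphism (which they do by graphical calculus, roughly along the lines you sketch), they invoke \cite[Proposition~5.7]{bd} to reduce the pivotality claim to verifying that $\xi_\alpha \colon \alpha \to D^2\alpha$ is the canonical isomorphism, identified with $\bar\chi^{D^2}$ from Lemma~\ref{lemmachi}. This equality is then exactly the content of Lemma~\ref{lemmachi2} applied to the monoidal autoequivalence $D^2 = \alpha^{-1} \otimes (-)^{\vee\vee} \otimes \alpha$ and the monoidal isomorphism $\xi \colon \id_{Z(\cat{C})} \to D^2$. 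Without this step the proof is incomplete: you would have shown $\xi$ is a monoidal trivialization of $D^2$, but not that it is a \emph{pivotal structure} in the Boyarchenko-Drinfeld sense. You should add the normalization check at $\alpha$, which is precisely where Lemmas~\ref{lemmachi} and~\ref{lemmachi2} (which you cite only in passing, and for the wrong purpose) actually earn their keep.
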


\begin{remark}\label{remformulaxi}
Before we prove Proposition~\ref{proppivotal},
 we give a useful reformulation of the pivotal structure in terms of the half braiding $\sigma$
  for $\alpha$
 given in Lemma~\ref{lemmahalfbraiding}:
 With the formula for the half braiding, we can see directly that
 $\xi_X$ is equal to the composition
\begin{align}
 X \ra{\text{coevaluation}} \alpha^{-1}\otimes \alpha \otimes X \ra{\sigma_X} \alpha^{-1}\otimes X
\otimes \alpha \ra{\omega_X} \alpha^{-1} \otimes X^{\vee \vee}\otimes\alpha =D^2 X \ .
\end{align}
In the graphical calculus for $\cat{C}$,
 the pivotal structure for the Grothendieck-Verdier duality on $Z(\cat{C})$
 is given by 
 \begin{align}
 	\tikzfig{pivotal} \ , \label{eqnpivotalexpl}
 \end{align}
where the crossing denotes the half braiding for $\alpha$.
The graphical calculus will be used throughout the paper. Let  us just recall the main points here: The graphical calculus is used to write 
morphisms in monoidal categories (possibly with more structure and properties).
We write them from bottom to top.
Objects are denoted by lines (for $\alpha$ and $\alpha^{-1}$, we will print this line in fat), and the juxtaposition of lines corresponds to the monoidal product. Evaluation and coevaluation are written as a cup and cap, respectively.
For the lines,
we often use arrows to make the labeling 
more efficient: 
An upwards pointing line labeled with $Y$ represents an object 
$Y\in \cat{C}$,
while a downwards pointing line represents the dual of $Y$. For example, in~\eqref{eqnpivotalexpl} the target object is $\alpha^{-1} \otimes X^{\vee \vee} \otimes \alpha$. 
We refer to~\cite{kassel,TV}
for a textbook introduction
to the graphical calculus.
\end{remark}

\begin{proof}[\slshape Proof of Proposition~\ref{proppivotal}]
The fact that the functor	
$D=  -^\vee \otimes \alpha$ is a Grothendieck-Verdier duality is purely formal: 
Since $Z(\cat{C})$ is rigid, we have  isomorphisms
	\begin{align} 
		Z(\cat{C})(\alpha,X\otimes Y)\cong Z(\cat{C})( X^\vee \otimes \alpha  , Y) 
	\end{align}
natural in $X,Y \in Z(\cat{C})$.
This shows that the equivalence $D=  -^\vee \otimes \alpha$ is a Grothendieck-Verdier duality on $Z(\cat{C})$. 

The non-trivial point is that $\xi$ is a pivotal structure for the Grothendieck-Verdier duality:
\begin{itemize}
	\item
First we  prove that $\xi$ is a monoidal isomorphism $\id_{Z(\cat{C})} \to D^2$ in $Z(\cat{C})$: 
The components $\xi_X$ are in fact morphisms in $Z(\cat{C})$ which follows from the naturality of $\omega$ and 
 the fact that the half braiding of $\alpha$ defines a morphism in $Z(\cat{C})$. 
In order to see that $\xi$ is monoidal, 
let us observe that
 the structure isomorphism
\small
\begin{align}
\mu_{X,Y}:D^2 X \otimes D^2 Y=\alpha \otimes X^{\vee \vee} \otimes \alpha^{-1} \otimes \alpha \otimes Y^{\vee\vee}\otimes \alpha^{-1} \ra{\cong} \alpha \otimes X^{\vee \vee} \otimes  Y^{\vee\vee}\otimes \alpha^{-1} \cong D^2(X\otimes Y)
\end{align}\normalsize
making $D^2$ monoidal by~\cite[Proposition~5.2]{bd} is the isomorphism
 induced by the evaluation of the invertible object $\alpha$.
Let us justify this:  
Recall from (the dual of) \cite[Equation~(2.3)]{bd}
the natural isomorphism \begin{align} q_{X,Y} : Z(\cat{C})(K,X\otimes Y) \to Z(\cat{C})(K,Y\otimes D^2 X) \end{align}
that, when specialized to our particular Grothendieck-Verdier duality at hand, is given by
\begin{align}
	\tikzfig{new_1} \ . \label{eq23}
\end{align}
With~\eqref{eq23}, it follows directly that the map~$(*)$ defined by the commutativity of the diagram
	\begin{equation}
	\begin{tikzcd}
		Z(\cat{C})(K, Y \otimes   D^2X_1 \otimes D^2 X_2 )  \ar[dd,swap,"      (*)    "]   &&& Z(\cat{C})(K, X_2 \otimes Y  \otimes   D^2X_1 ) \ar[lll,swap,"q_{X_2,Y\otimes D^2 X_1}"]   \\  \\
		Z(\cat{C})(K, Y \otimes D^2(X_1  \otimes X_2 ) )  &&&      Z(\cat{C})(K,X_1  \otimes X_2 \otimes  Y ) \ar[uu,swap,"q_{X_1,X_2\otimes Y}"] \ar[lll,"q_{X_1\otimes X_2,Y}"]
	\end{tikzcd}
\end{equation}
is the isomorphism induced by the evaluation of $\alpha$. 
But by the construction of the monoidal structure on $D^2$ in \cite[Section~5]{bd} the map~$(*)$ is exactly what gives us the structure maps $\mu_{X_1,X_2}:D^2 X_1 \otimes D^2 X_2 \to D^2 (X_1 \otimes X_2)$; more precisely, $(*)$ is the 
composition with $\id_Y \otimes \mu_{X_1,X_2}$.  
This shows that $\mu$ is, as claimed above, indeed induced by the evaluation of $\alpha$.

Now the equality
\begin{align}
\mu_{X,Y} (\xi_X \otimes \xi_Y)=\xi_{X\otimes Y}
\end{align}
is a consequence of the following computation using the graphical calculus (we use for this Remark~\ref{remformulaxi}): 
\begin{align}
	\tikzfig{mono} 
\end{align}
Moreover, we can directly read off from~\eqref{eqnpivotalexpl} 
that the component $\xi_I$ is just the 
 coevaluation $I\to \alpha^{-1}\otimes \alpha$. The canonical map $I \to D^2I$ from~\cite[Equation~(2.10)]{bd}
  is 
  the inverse of the isomorphism $D^2I\cong D (I^\vee \otimes \alpha) \cong \alpha^{-1} \otimes \alpha \cong I $, where the last isomorphism is the evaluation. Hence, $\xi_I$ agrees indeed with the canonical map $I\to D^2 I$ from~\cite[Equation~(2.10)]{bd}.

\item
	In order to conclude the proof that $\xi$ is a pivotal structure for the Grothendieck-Verdier duality $D= -^\vee \otimes  \alpha$, it suffices 
	by \cite[Proposition~6.7]{bd}
	to show that the component $\xi_{\alpha} :  \alpha\to D^2 \alpha $ is the canonical isomorphism from the dualizing object to its double Grothendieck-Verdier dual.
	The latter morphism is the canonical isomorphism $\alpha \cong \alpha^{-1} \otimes \alpha \otimes \alpha = D^2 \alpha$ and agrees with the canonical isomorphism $\bar \chi^{D^2}$ (see Lemma~\ref{lemmachi}) for the monoidal autoequivalence $D^2 = \alpha^{-1}\otimes -^{\vee\vee} \otimes \alpha$. 
	Now the desired statement takes the form $\xi_{\alpha}=\bar \chi^{D^2}$ and is exactly the content of Lemma~\ref{lemmachi2}.	
	\end{itemize}
	\end{proof}

By the dual version of~\cite[Proposition~7.10]{bd} there is a canonical monoidal isomorphism \begin{align}\label{eqnvartheta}\vartheta^{\pm} : J^{\pm 1} \to D^2
	\end{align} 
	from the Joyal-Street equivalence $J$ (or its inverse $J^-=J^{-1}$, respectively)
	to the square of any Grothendieck-Verdier duality
	 on a braided monoidal category $\cat{B}$: In our dual conventions, the 
 morphism
$\vartheta^{\pm}_X \colon X \to D^2 X$ is characterized as the unique morphism such that
\begin{align} \cat{B}(K, Y\otimes X ) \ra{{(\id_Y\otimes \vartheta^{\pm}_X )_* }} \cat{B}(K, Y\otimes D^2 X ) \end{align} is the map \begin{align} \label{eqncn} \cat{B}(K, Y\otimes X ) \ra{(c_{X,Y}^{\pm})_* } \cat{B}(K, X \otimes Y) \ra{\nu_{X,Y}} \cat{B}(K, Y\otimes D^2 X ) \ , \end{align} where 
\begin{itemize}
	\item
the first isomorphism is induced by the braiding and its inverse, respectively, \item and
the second
isomorphism is induced by the Grothendieck-Verdier structure, i.e.\
the isomorphisms $\cat{B}(K,X\otimes Y)\cong \cat{B}(DX,Y)$ and the fact the $D$ is an anti-equivalence:
\begin{align}\label{Eq: nu}
	\nu_{X,Y}:
	\cat{B}(K,X\otimes Y) \cong \cat{B}(DX,Y)\stackrel{D}{\cong}
	 \cat{B}(DY,D^2 X)\cong \cat{B}(K,Y\otimes D^2 X) \ . 
	\end{align}
\end{itemize}

\begin{corollary}\label{corbalancing}
	For any pivotal finite tensor category $\cat{C}$, the braided category $Z(\cat{C})$ has a unique balancing $\theta$ with $\vartheta_X^+ \theta_X = \xi_X$ for $X\in Z(\cat{C})$, where $\xi$ is the pivotal structure for the Grothendieck-Verdier duality from Proposition~\ref{proppivotal}.
	\end{corollary}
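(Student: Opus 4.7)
My plan is to simply set $\theta := (\vartheta^+)^{-1} \circ \xi$ and verify that this is a balancing, with uniqueness being automatic from the defining equation.

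First, observe that both of the ingredients are monoidal natural isomorphisms between monoidal functors $Z(\cat{C}) \to Z(\cat{C})$: the isomorphism $\vartheta^+ : J \to D^2$ is monoidal by the dual of \cite[Proposition~6.10]{bd} cited just above the statement, and $\xi : \id_{Z(\cat{C})} \to D^2$ is monoidal by Proposition~\ref{proppivotal}. Hence the composition $\theta_X := (\vartheta^+_X)^{-1} \circ \xi_X : X \to JX$ is a monoidal natural isomorphism from $\id_{Z(\cat{C})}$ to the Joyal-Street equivalence $J$. As recalled at the beginning of the section, a balancing on the braided category $Z(\cat{C})$ is by definition exactly a monoidal isomorphism $\id_{Z(\cat{C})} \cong J$ (this reformulation of the usual definition via $\theta_I = \id_I$ and $\theta_{X\otimes Y} = c_{Y,X} c_{X,Y}(\theta_X \otimes \theta_Y)$ is standard and was mentioned in the introduction and at the start of the section). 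Thus $\theta$ defined in this way is a balancing.

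For the defining relation $\vartheta^+ \theta = \xi$, this is by construction. For uniqueness, observe that the equation $\vartheta^+_X \theta_X = \xi_X$ together with the invertibility of $\vartheta^+_X$ forces $\theta_X = (\vartheta^+_X)^{-1} \xi_X$ objectwise, so $\theta$ is uniquely pinned down.

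I do not expect any real obstacle here: the content of the corollary is essentially to package $(\vartheta^+)^{-1} \circ \xi$ as a balancing and to record that this is the unique balancing compatible with the pivotal structure $\xi$ via $\vartheta^+$. The only small point worth emphasising is that both $\vartheta^+$ and $\xi$ are monoidal (not merely natural) isomorphisms, which is what makes the composition a balancing rather than just a natural automorphism of $\id_{Z(\cat{C})}$.
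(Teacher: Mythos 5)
Your argument matches the paper's proof exactly: define $\theta := (\vartheta^+)^{-1}\circ\xi$, note that both $\xi$ (by Proposition~\ref{proppivotal}) and $\vartheta^+$ (by \cite[Proposition~6.10]{bd}) are monoidal, so $\theta$ is a monoidal isomorphism $\id_{Z(\cat{C})}\to J$, i.e.\ a balancing, with uniqueness forced by the defining equation. This is precisely the paper's reasoning.
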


\begin{proof}
	Clearly, $\theta$ must be defined as the composition
\begin{align}
\theta : \id_{Z(\cat{C})} \ra{\xi} D^2 \ra{(\vartheta^+)^{-1}} J \ , \label{eqnourtheta}
\end{align}
where $\xi$ is from 
Proposition~\ref{proppivotal}
and $\vartheta^+$ from~\eqref{eqnvartheta}.
The isomorphisms $\xi$ and $\vartheta^+$ are monoidal. This is true for
 $\xi$ by Proposition~\ref{proppivotal} and for $\vartheta^+$ by \cite[Proposition~7.10]{bd}.  Therefore, $\theta$ is monoidal.
A monoidal natural isomorphism from the identity to $J$
 is exactly a balancing. This finishes the proof.
\end{proof}

One of the most critical steps towards the main result is the following:

\begin{proposition}\label{propribbon}
	Let $\cat{C}$ be a pivotal finite tensor category.
	Then $\theta_{DX}=D\theta_X$ for $X \in Z(\cat{C})$ and the balancing $\theta$ from Corollary~\ref{corbalancing}. 
	\end{proposition}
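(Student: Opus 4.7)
My plan is to reduce the ribbon identity $\theta_{DX}=D\theta_X$ to the Grothendieck-Verdier pivotal condition already established in the proof of Proposition~\ref{proppivotal}. By Corollary~\ref{corbalancing} the balancing satisfies $\theta_X=(\vartheta^+_X)^{-1}\xi_X$, so both sides of the ribbon identity decompose into a $\xi$-contribution and a $\vartheta^+$-contribution. The strategy is to test the identity after applying the Yoneda-type functor $Z(\cat{C})(\alpha, Y\otimes -)$ for arbitrary $Y\in Z(\cat{C})$, and to show that the $\vartheta^+$-contributions match each other while the $\xi$-contributions match by the pivotality of the Grothendieck-Verdier duality.

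Concretely, I would unfold the $\vartheta^+_{DX}$-part of $\theta_{DX}$ and the $D\vartheta^+_X$-part of $D\theta_X$ using the defining characterization~\eqref{eqncn} of $\vartheta^+$ in terms of the braiding and the Grothendieck-Verdier isomorphism $\nu_{-,-}$ from~\eqref{Eq: nu}. The naturality of $\nu_{-,-}$, together with the fact that $D$ is an anti-equivalence and the hexagon axiom for the braiding on $Z(\cat{C})$, should identify $D\vartheta^+_X$ with the universal characterization of $\vartheta^+_{DX}$ after transport along the canonical isomorphism $D^2X\cong X$. After this cancellation, what remains of the ribbon identity is precisely the assertion that $\xi$ is a pivotal structure on the Grothendieck-Verdier duality in the sense of~\cite[Proposition~5.7]{bd}, i.e.\ that its value at the dualizing object $\alpha$ is the canonical isomorphism $\alpha\to D^2\alpha$. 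This was already checked in the last bullet of the proof of Proposition~\ref{proppivotal} via Lemma~\ref{lemmachi2}.

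The main obstacle is expected to be the second step above, namely the compatibility of $\vartheta^+$ with $D$ under the hom-space characterization. This is where both the braiding on $Z(\cat{C})$ and the half braiding of $\alpha$ from Lemma~\ref{lemmahalfbraiding} enter in a non-trivial way, and I expect the cleanest path to be via the graphical calculus of Remark~\ref{remformulaxi} rather than a purely symbolic manipulation, so as to keep the crossing for the half braiding of $\alpha$ visible alongside the braiding on $Z(\cat{C})$. The compression of the essential algebraic content into a single check at the dualizing object, as performed in Proposition~\ref{proppivotal}, is what makes the argument tractable: once one knows the value of $\xi$ at $\alpha$, the remaining identity is purely formal in the balanced braided Grothendieck-Verdier framework.
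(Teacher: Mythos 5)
The proposal contains a genuine gap that defeats it at the conceptual level. You claim that after testing the ribbon identity against $Z(\cat{C})(\alpha, Y\otimes -)$ and unwinding $\vartheta^+_{DX}$ versus $D\vartheta^+_X$, the $\vartheta^+$-contributions will cancel, so that what remains is exactly the assertion that $\xi$ is a pivotal structure for the Grothendieck-Verdier duality --- which was already checked via Lemma~\ref{lemmachi2} in the proof of Proposition~\ref{proppivotal}. If that were correct, the ribbon property $\theta_{DX}=D\theta_X$ would be a formal consequence of $\xi$ being pivotal, i.e.\ every balanced braided Grothendieck-Verdier category with pivotal structure would automatically be ribbon. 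That is false, and it is precisely the point of the Boyarchenko-Drinfeld criterion (Proposition~\ref{propbdbal} in the paper, \cite[Proposition~7.1~\&~Corollary~8.3]{bd}): given a balancing $\theta$ with associated pivotal structure $\xi=\vartheta^+\theta$ (so $\xi$ is already pivotal by hypothesis), the ribbon condition is equivalent to the \emph{extra} identity $\xi^2=\gamma$, where $\gamma=\vartheta^+\vartheta^-:\id\to D^4$. In other words, the $\vartheta$-contributions do \emph{not} cancel; the mismatch between $D\vartheta^+_X$ and $\vartheta^+_{DX}$ is what generates $\gamma$, and that is a nontrivial residual condition on top of pivotality.

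Consequently, your proof leaves exactly the hard part undone. The paper's proof of Proposition~\ref{propribbon} reduces to $\xi^2=\gamma$ via Proposition~\ref{propbdbal} and then proves $\xi^2=\gamma$ by a graphical computation: one first derives an explicit formula~\eqref{Eq: nu+ alone} for $\vartheta^\pm$, then assembles $\gamma_X=D^2\vartheta^-_X\circ\vartheta^+_X$, and identifies inside that formula the Radford isomorphism $\Delta_X$ of $Z(\cat{C})$. The crucial nonformal input is Shimizu's compatibility \cite[proof of Theorem~5.8]{shimizuribbon} between the Radford isomorphisms of $\cat{C}$ and $Z(\cat{C})$, which lets one rewrite $\Delta_X\otimes\alpha$ in terms of the half braiding $\sigma$ of $\alpha$ and then match the result against the explicit formula for $\xi$ in Remark~\ref{remformulaxi}. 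None of this is visible in your proposal. Your last sentence --- that the remaining identity is ``purely formal in the balanced braided Grothendieck-Verdier framework'' --- is the exact mistake: it is not purely formal, and Shimizu's theorem is the nonformal ingredient that closes the gap.
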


For the proof, we will use the following criterion:

\begin{proposition}[$\text{Boyarchenko-Drinfeld \cite[Proposition~8.1 \& Corollary~9.3]{bd}}$]\label{propbdbal}
	Let $\cat{B}$ be a braided 
	Groth\-en\-dieck-Verdier category $\cat{B}$.
	A balancing $\theta: \id_{\cat{B}} \to J$ on $\cat{B}$
	satisfies $\theta_{DX}=D\theta_X$ for all $X\in \cat{B}$ if and only if 
	the pivotal structure
	\begin{align}
		\xi : \id_{\cat{B}} \ra{\theta} J \ra{\vartheta^+} D^2\label{defxi}
	\end{align}
	associated to $\theta$ satisfies $\xi^2 = \gamma : \id_{\cat{B}} \to D^4$, where $\gamma := \vartheta^+ \vartheta^-$ is the canonical isomorphism $\id_{Z(\cat{C})} \to D^4$. 
\end{proposition}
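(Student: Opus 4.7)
The plan is to establish the biconditional by translating both conditions into a single algebraic identity that governs how the Grothendieck-Verdier duality $D$ interacts with the canonical monoidal isomorphism $\vartheta^+$.

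First, I would rewrite the ribbon condition $\theta_{DX} = D\theta_X$ using the defining relation $\xi = \vartheta^+\theta$ of~\eqref{defxi}, i.e.\ $\theta = (\vartheta^+)^{-1}\xi$. Substituting and rearranging gives the equivalent identity
\begin{align}
\xi_{DX} \;=\; \vartheta^+_{DX} \cdot D\big((\vartheta^+_X)^{-1}\big) \cdot D\xi_X
\end{align}
for every $X\in\cat{B}$. The goal is then to show that this identity is in turn equivalent to $\xi^2 = \gamma$, i.e.\ $D^2\xi_X \cdot \xi_X = \gamma_X$.

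The crucial intermediate step is a coefficient identity, which I would establish as a separate lemma: the prefactor $\vartheta^+_{DX} \cdot D((\vartheta^+_X)^{-1})$ agrees, after naturality rearrangements, with a prescribed composition of $\gamma_X$ and powers of $\xi$. To prove it, I would use the characterization~\eqref{eqncn} of $\vartheta^\pm$: the morphisms $\vartheta^+_X$ and $\vartheta^-_X$ are pinned down by their interaction with the braiding $c$ and its inverse $c^{-1}$ via the Grothendieck-Verdier isomorphism $\nu_{X,Y}$ of~\eqref{Eq: nu}. Applying the anti-monoidal equivalence $D$ to the characterizing identity for $\vartheta^+_X$ reverses the tensor order and conjugates $c$ into $c^{-1}$, thereby converting the defining property of $\vartheta^+$ at $X$ into that of $\vartheta^-$ at $DX$ up to canonical identifications. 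Combined with the definition $\gamma = \vartheta^+\vartheta^-$ and the monoidality of $\vartheta^\pm$, this yields the desired coefficient identity, from which the biconditional $\theta_{DX}=D\theta_X \Longleftrightarrow \xi^2=\gamma$ drops out by a direct manipulation that only uses naturality of $\xi$, $\vartheta^\pm$, and $\gamma$.

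The main obstacle will be the careful bookkeeping of coherence isomorphisms (associators, unitors, and the anti-monoidal constraints of $D$) when translating~\eqref{eqncn} through $D$, and in particular making precise the assertion that $D$ intertwines $\vartheta^+$ with $\vartheta^-$ modulo $\gamma$. This is delicate but essentially routine, and is the content of Boyarchenko-Drinfeld's argument in \cite[Proposition~7.1 \& Corollary~8.3]{bd}, whose strategy we follow verbatim.
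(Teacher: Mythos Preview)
The paper does not prove this proposition; it is stated with attribution to Boyarchenko--Drinfeld \cite[Proposition~7.1 \& Corollary~8.3]{bd} and then used as a black box in the proof of Proposition~\ref{propribbon} that immediately follows. There is therefore no proof in the paper to compare your proposal against.

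Your sketch outlines a plausible strategy --- reducing the ribbon condition to an identity expressing how $D$ intertwines $\vartheta^+$ with $\vartheta^-$, and then reading off $\xi^2=\gamma$ --- and you yourself note that you are following the argument of \cite{bd} verbatim. That is entirely consistent with how the paper treats the result. If your intent is to supply a self-contained proof, you would still need to actually carry out the ``careful bookkeeping'' you flag as the main obstacle; as written, the proposal is a plan rather than a proof, but since the paper itself defers to \cite{bd}, this is not a discrepancy.
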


\begin{proof}[\slshape Proof of Proposition~\ref{propribbon}]
By Proposition~\ref{propbdbal} and~\eqref{eqnourtheta}
it now suffices to prove $\xi^2 = \gamma$,
where $\xi$ was introduced in Proposition~\ref{proppivotal}.
We will obtain a proof of the desired equality
$\xi^2 = \gamma$
 by providing a formula for $\gamma:\id_{Z(\cat{C})} \to D^4$ using the graphical calculus of the braided category $Z(\cat{C})$. Deriving the formula is slightly involved, but once we have it,  the equality $\xi^2=\gamma$ can be read off.

 As a first step, we have to spell out the isomorphisms $\vartheta^\pm$ from~\eqref{eqnvartheta}
 for the 
 braided Grothendieck-Verdier category $Z(\cat{C})$:
 For the following computation, it is important to have an explicit description of $D^{-1}$. It
 sends an  object $X$ to $D^{-1}X = \alpha \otimes{}^\vee X$, where ${}^\vee X$ is the
 left dual of $X$. 
  By the Yoneda Lemma $\vartheta^{\pm}$ is the image of the identity under the isomorphism 
\begin{align} Z(\cat{C})(X,X) \cong Z(\cat{C})(\alpha, D^{-1} X \otimes X ) \ra{\eqref{eqncn}} Z(\cat{C})(\alpha, D^{-1} X \otimes D^2 X ) \cong Z(\cat{C})(X,D^2 X) \, , \label{eqncompmaps}
	\end{align} 
\normalsize	
 which we compute using the graphical calculus (we first give the `$+$'-variant):
\begin{align}\label{Eq: nu+}
	\tikzfig{v+} \ , 
\end{align}
where \begin{itemize}
	\item the first map is the identification $Z(\cat{C})(X,X) \cong Z(\cat{C})(\alpha, D^{-1} X \otimes X )$, 
	\item the second is the braiding part of~\eqref{eqncn},\item the next two maps are the first
two identifications 
from~\eqref{Eq: nu} for the definition of $\nu$, 
\item and the last map is the composition of the identification 
\begin{align}
Z(\cat{C})(D(D^{-1}X),D^2 X)\cong Z(\cat{C})(\alpha , D^{-1} X \otimes D^2 X)\end{align}
from~\eqref{Eq: nu} with the identification $Z(\cat{C})(\alpha, D^{-1} X \otimes D^2 X ) \cong Z(\cat{C})(X,D^2 X)$ from~\eqref{eqncompmaps}.\end{itemize}
In summary, we find
\begin{align}\label{Eq: nu+ alone}
	\tikzfig{v+equation} \ .
\end{align}
For the `$-$'-variant (i.e.\ for $\vartheta^-$),
we obtain  the same picture with all overcrossings 
replaced by undercrossings. 

The formula for $\vartheta^\pm$ provides us with a formula for the natural isomorphism $\gamma \colon \id_{Z(\cat{C})} = J^+ \circ J^- \xrightarrow{\vartheta^+ \circ \vartheta^-} D^4$: The component of $\gamma$ at an object $X \in Z(\cat{C})$
is the composition
$\gamma_X \colon X \xrightarrow{\vartheta^+} D^2X \xrightarrow{D^2 \vartheta^- } D^4 X$. Using that the value of $D^2$ on a morphism can be expressed through the pivotal structure via
\begin{align}
\tikzfig{D2} \qquad , 
\end{align}
we find: 
\begin{align}\label{Eq: gamma}
\tikzfig{gamma} \ , 
\end{align}
where we have drawn a dotted box to single out an isomorphism 
$\Delta_X:X \to X^{\vee\vee\vee\vee}$ in $Z(\cat{C})$ that we will investigate now: 
This isomorphism is the Radford isomorphism for $Z(\cat{C})$ as follows from \cite[Theorem~8.10.7]{egno} (see also \cite[Section~5.5]{shimizuribbon}) when combined with the fact that the distinguished invertible object of $Z(\cat{C})$ is the monoidal unit~\cite[Proposition~4.4 \& 4.5]{eno-d}. 
It is shown in the proof of \cite[Theorem~5.8]{shimizuribbon}
that $\Delta_X \otimes \alpha : X \otimes \alpha \to X ^{\vee\vee\vee\vee}\otimes \alpha$ agrees with
\begin{align}
	X\otimes \alpha \ra{\sigma_X} \alpha \otimes X \ra{\delta^+} X^{\vee\vee\vee\vee}\otimes \alpha \ ; \label{Xalpha2}
\end{align}
in other words, this is a relation between the Radford isomorphisms in $\cat{C}$ and $Z(\cat{C})$.  
We can rewrite~\eqref{Xalpha2}
using the half braiding of $\alpha$ from Lemma~\ref{defhalfbraiding} as
\begin{align}
X\otimes \alpha \ra{\sigma_X} \alpha \otimes X \ra{\sigma_\alpha} X\otimes \alpha \ra{\omega^2 } X^{\vee\vee\vee\vee}\otimes \alpha \ \ .
\end{align} 
Inserting this into the formula for $\gamma_X$ from~\eqref{Eq: gamma} we find:
\begin{align}
\tikzfig{last}
\end{align}
This agrees with $\xi^2$, as we can see with the formula given for $\xi$ in Remark~\ref{remformulaxi}. This finishes the proof. 
	\end{proof}

If a finite tensor category $\cat{C}$ is pivotal, then $Z(\cat{C})$ inherits a pivotal structure from its rigid duality.
Since $Z(\cat{C})$ is also braided, it comes also with a balancing:
\begin{align}
	\tikzfig{pivotal_rigid} \qquad . 
	\label{eqnpivrigid}
\end{align}
We refer to this as the \emph{canonical balanced braided structure} of the Drinfeld center of a pivotal finite tensor category.

The description of $\vartheta^+$ in the proof
of Proposition~\ref{propribbon} can be used to prove the following:

\begin{corollary}\label{corcanbalbr}
	The balancing braided structure on the Drinfeld center $Z(\cat{C})$ of a pivotal finite tensor category $\cat{C}$
	 from Corollary~\ref{corbalancing} 
	agrees with the canonical one.
\end{corollary}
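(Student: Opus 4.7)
My plan is to show directly that the balancing $\theta = (\vartheta^+)^{-1}\xi$ constructed in Corollary~\ref{corbalancing} coincides with the canonical balancing $\theta^{\text{can}}$ depicted in~\eqref{eqnpivrigid}. The braided structures agree automatically, since both use the canonical braiding of $Z(\cat{C})$, so the comparison reduces to an equality of balancings. Equivalently, I would verify that $\vartheta^+_X \circ \theta^{\text{can}}_X = \xi_X$ for every $X \in Z(\cat{C})$.

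All three ingredients already have explicit graphical descriptions derived in the preceding pages: $\xi_X$ via Remark~\ref{remformulaxi}, assembled from a coevaluation of $\alpha$, the half braiding $\sigma_X$ of Lemma~\ref{lemmahalfbraiding}, and the pivotal structure $\omega_X$; the isomorphism $\vartheta^+_X$ via equation~\eqref{Eq: nu+ alone}, built from the braiding with the $\alpha$-strand that enters through $D^{-1}X = \alpha \otimes {}^\vee X$ together with a pivotal-type loop; and $\theta^{\text{can}}_X$ as in~\eqref{eqnpivrigid}, a standard combination of $\omega$ with the self-braiding of $X$.

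Substituting these three pictures into $\vartheta^+_X \circ \theta^{\text{can}}_X$ and simplifying should produce $\xi_X$ as follows. The $\alpha$-strand contributed by $D^{-1}X$ in $\vartheta^+_X$ is slid across the $X$-strand by means of the half braiding $\sigma_X$; by the very definition $\sigma_X = \omega_X^{-2} \circ \delta^+_X$, this introduces a factor $\omega^{-2}$ and the Radford isomorphism $\delta^+_X$, which are precisely the data that were built into $\xi_X$ through its alternative description $\xi_X = \omega^{-1}_{X^{\vee\vee}} \circ \delta_X$ of Proposition~\ref{proppivotal}. The $\omega$-factors contributed by $\theta^{\text{can}}$ and by the pivotal loop in $\vartheta^+$ combine with this $\omega^{-2}$ to leave exactly one pivotal factor, while the two $\alpha$-loops close up using the zigzag identity for the duality of $\alpha$ in $Z(\cat{C})$. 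What remains is precisely the picture for $\xi_X$ given in Remark~\ref{remformulaxi}.

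The main obstacle is purely organizational: the argument is a controlled diagrammatic computation, and one must match up the $\alpha$-strands and pivotal factors carefully so that isotopy together with the standard identities of a pivotal braided category reduces everything to the canonical diagram. The substantive input is the identity $\sigma_X = \omega_X^{-2}\circ \delta^+_X$, which is exactly what forces $\vartheta^+_X$ to absorb the Radford-type factors hidden in $\xi_X$ and leave behind the classical `pivotal-plus-braiding' picture of~\eqref{eqnpivrigid}.
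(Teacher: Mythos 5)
Your proposal takes the same route as the paper: both reduce the claim to the identity $\xi = \vartheta^+ \circ \theta^{\mathrm{can}}$ and then verify it directly in the graphical calculus using the explicit formula for $\vartheta^+$ from~\eqref{Eq: nu+ alone}, the description of $\xi$ in Remark~\ref{remformulaxi}, and the definition $\sigma_X = \omega_X^{-2}\circ\delta^+_X$. The paper leaves the diagrammatic verification implicit, whereas you sketch a bit more of the mechanics (sliding the $\alpha$-strand via the half braiding, cancelling the $\omega$-factors, closing up $\alpha$-loops), but the strategy is identical.
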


\begin{proof}
	Clearly, the underlying braided monoidal structures agree by definition.
	The statement is only non-trivial for the balancing:
The balancing 
from Corollary~\ref{corbalancing}
is by definition the composition of the pivotal structure $\xi$ from Proposition~\ref{proppivotal}
with the inverse of $\vartheta^+$. 
Therefore, it suffices to prove $\xi = \vartheta^+ \theta^\circ$ for the canonical balancing. 
Indeed, with the expression for $\vartheta^+$ found in~\eqref{Eq: nu+ alone},
 this can be directly verified in the graphical calculus.
\end{proof}

We may now state and prove the main result:

\begin{theorem}\label{thmribbon}
Let $\cat{C}$ be a pivotal finite tensor category.
Then  the distinguished invertible object of $\cat{C}$ equipped with the half braiding induced by the Radford isomorphism and the pivotal structure of $\cat{C}$ is a dualizing object that makes $Z(\cat{C})$ a ribbon Grothendieck-Verdier category.
Up to equivalence, this is the only ribbon Grothendieck-Verdier structure on $Z(\cat{C})$ that extends the canonical balanced braided structure.
\end{theorem}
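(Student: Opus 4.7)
My plan is to split the proof into \textbf{existence} and \textbf{uniqueness}, both of which should now follow by assembling the preceding results.

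For \textbf{existence}, I would proceed as follows. By Lemma~\ref{lemmahalfbraiding}, the distinguished invertible object $\alpha$ equipped with the half braiding $\sigma$ from~\eqref{defhalfbraiding} defines an object of $Z(\cat{C})$. Proposition~\ref{proppivotal} then exhibits $D = -^\vee \otimes \alpha$ as a Grothendieck-Verdier duality on $Z(\cat{C})$ together with a pivotal structure $\xi$. Corollary~\ref{corbalancing} converts this pivotal structure into a balancing $\theta$ characterized by $\vartheta^+_X \theta_X = \xi_X$, and Proposition~\ref{propribbon} establishes the crucial identity $\theta_{DX} = D\theta_X$, which is exactly the condition that promotes the Grothendieck-Verdier duality together with the balancing to a ribbon Grothendieck-Verdier structure. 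Finally, Corollary~\ref{corcanbalbr} identifies $\theta$ with the canonical balancing on $Z(\cat{C})$, so the ribbon Grothendieck-Verdier structure obtained this way indeed extends the canonical balanced braided structure.

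For \textbf{uniqueness}, I would apply Corollary~\ref{corcyclic}. To do so, I would verify its two hypotheses for $\cat{A} = Z(\cat{C})$: first, the braiding of $Z(\cat{C})$ is non-degenerate by \cite[Proposition~4.4]{eno-d} combined with \cite[Theorem~1.1]{shimizumodular}, as already recalled in the introduction; second, the monoidal unit of $Z(\cat{C})$ is simple, since it has the simple unit of $\cat{C}$ as underlying object with the trivial half braiding. Corollary~\ref{corcyclic} then tells us that the space of ribbon Grothendieck-Verdier extensions of the canonical balanced braided structure on $Z(\cat{C})$ is either empty or $Bk^\times$. Since the existence part already produces such an extension, the space is $Bk^\times$, and in particular any two ribbon Grothendieck-Verdier structures extending the canonical balanced braided structure are equivalent.

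The genuine work has already been done upstream: the pivotal property encoded in Proposition~\ref{proppivotal} and the ribbon identity in Proposition~\ref{propribbon} rest on careful graphical-calculus manipulations using Shimizu's relation between the Radford isomorphisms in $\cat{C}$ and in $Z(\cat{C})$, and Corollary~\ref{corcyclic} rests on the seven-term exact sequence of Theorem~\ref{thmexactseq}. Relative to those inputs, I do not expect any serious obstacle in the final assembly; the only point worth double-checking is that the framework of $\Lexf$ and the non-degeneracy hypothesis of Corollary~\ref{corcyclic} genuinely apply to the Drinfeld center of a pivotal finite tensor category, which they do.
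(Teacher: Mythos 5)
Your proposal is correct and follows essentially the same route as the paper: existence is assembled from Lemma~\ref{lemmahalfbraiding}, Proposition~\ref{proppivotal}, Corollary~\ref{corbalancing}, Proposition~\ref{propribbon}, and Corollary~\ref{corcanbalbr}, while uniqueness is deduced from Corollary~\ref{corcyclic} after recalling non-degeneracy of the braiding on $Z(\cat{C})$ via \cite[Proposition~4.4]{eno-d} and \cite[Theorem~1.1]{shimizumodular}. The only difference is that you make the simplicity of the unit of $Z(\cat{C})$ explicit, which the paper leaves implicit; this is a harmless addition.
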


\begin{proof}
	We just have to piece the results together: Proposition~\ref{proppivotal} and~\ref{propribbon} tell us that the distinguished invertible object of $\cat{C}$ with its half braiding from Lemma~\ref{lemmahalfbraiding} is a dualizing object for $Z(\cat{C})$ whose Grothendieck-Verdier duality is in fact ribbon. 
	The underlying balanced braided structure is the canonical one by Corollary~\ref{corcanbalbr}.
	
	The  proof of the uniqueness statement
	requires entirely different methods. 
	We treat the uniqueness question in Section~\ref{seccyclicstructures}, where we will formally finish this proof on page~\pageref{endofproof}.
	\end{proof}

\begin{example}\label{exgraded2}
	The ribbon Grothendieck-Verdier structure on the Drinfeld center $Z(\Vect_G^{\lambda,d})$ 
	of the pivotal finite tensor category $\Vect_G^{\lambda,d}$ from Example~\ref{exgraded}
	has the following explicit description: The dualizing object is $K=(I, (\omega^{(d)})^2)$ and the duality functor maps an object 
	$(V=\bigoplus_{g\in G}V_g ,\sigma_V)$ to $(V^*, {\omega^{(d)}}^2 \sigma^{*}_V)$ where $V^*$ is the dual object of $V$ in $\Vect_G^{\lambda,d}$, $\sigma^*_V$ is the canonical half braiding on $V^*$~\cite[Section~5.2.1]{TV}, and ${\omega^{(d)}}^2$ is the square of the pivotal structure
	associated to $d:G\to k^\times$. 
\end{example}

\begin{corollary}\label{corspherical}
	For the 
ribbon Grothendieck-Verdier duality on the Drinfeld center $Z(\cat{C})$ of a pivotal finite tensor category $\cat{C}$ described in Theorem~\ref{thmribbon}, the following are equivalent:
\begin{pnum}
	\item The dualizing object is isomorphic to the unit in $Z(\cat{C})$.\label{corsphericali}
	\item $\cat{C}$ is spherical.\label{corsphericalii}
	\item The Grothendieck-Verdier duality of $Z(\cat{C})$ agrees with the rigid duality.\label{corsphericaliii}
	\item $Z(\cat{C})$ with its canonical balanced braided structure is a modular category.\label{corsphericaliv}
\end{pnum} 
	\end{corollary}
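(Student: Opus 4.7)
The plan is to establish a cycle of implications among the four conditions, exploiting the explicit description of the ribbon Grothendieck-Verdier structure provided by Theorem~\ref{thmribbon}: the dualizing object is $\alpha$ with the half braiding from Lemma~\ref{lemmahalfbraiding}, and the duality functor is $D=-^\vee\otimes\alpha$.

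The equivalence (\ref{corsphericali})$\Leftrightarrow$(\ref{corsphericalii}) is already recorded in Lemma~\ref{lemmahalfbraiding}: by construction of the half braiding, $\alpha\cong I$ holds in $Z(\cat{C})$ exactly when $\cat{C}$ is spherical in the Douglas--Schommer-Pries--Snyder sense. For (\ref{corsphericali})$\Leftrightarrow$(\ref{corsphericaliii}), I would observe that since $\alpha$ is invertible, $D=-^\vee\otimes\alpha$ agrees (as a functor on $Z(\cat{C})$, naturally) with the rigid duality $-^\vee$ if and only if $-\otimes\alpha\cong\id_{Z(\cat{C})}$, which happens precisely when $\alpha\cong I$ in $Z(\cat{C})$. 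Equivalently, a GV duality is determined by its dualizing object; the rigid duality $-^\vee$ is the GV duality whose dualizing object is $I$, and $D$ has dualizing object $\alpha$, so the two agree iff $\alpha\cong I$ in $Z(\cat{C})$.

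For (\ref{corsphericaliii})$\Rightarrow$(\ref{corsphericaliv}), I would argue as follows. Theorem~\ref{thmribbon} produces a ribbon GV structure on $Z(\cat{C})$ with duality $D$, and Corollary~\ref{corcanbalbr} tells us the underlying balanced braided structure is the canonical one. If $D$ coincides with the rigid duality, then the canonical balancing is compatible with rigid duality in the sense that makes $Z(\cat{C})$ a ribbon category in the traditional sense. Combined with non-degeneracy of the braiding of $Z(\cat{C})$ (cited in the introduction via \cite[Prop.~4.4]{eno-d} and \cite[Thm.~1.1]{shimizumodular}), this makes $Z(\cat{C})$ modular.

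For the reverse direction (\ref{corsphericaliv})$\Rightarrow$(\ref{corsphericali}), which I expect to be the only non-formal step, I would invoke the uniqueness part of Theorem~\ref{thmribbon}. If $Z(\cat{C})$ is modular with its canonical balanced braided structure, then the rigid duality $-^\vee$ itself provides a ribbon Grothendieck-Verdier structure extending the canonical balanced braided structure (its dualizing object being $I$). On the other hand, Theorem~\ref{thmribbon} provides another such structure, with dualizing object $\alpha$. The uniqueness clause of Theorem~\ref{thmribbon} forces these two structures to be equivalent; in particular their dualizing objects are isomorphic in $Z(\cat{C})$, giving $\alpha\cong I$ in $Z(\cat{C})$, which is (\ref{corsphericali}). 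This closes the cycle. The only real input is the uniqueness statement in Theorem~\ref{thmribbon}, whose proof is deferred to the later section on cyclic structures; everything else here is a direct unpacking of the explicit construction.
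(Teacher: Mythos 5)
Your proposal is correct and matches the paper's proof essentially step for step: (i)$\Leftrightarrow$(ii) from Lemma~\ref{lemmahalfbraiding}, (i)$\Leftrightarrow$(iii) by inspecting $D=-^\vee\otimes\alpha$, (iii)$\Rightarrow$(iv) via non-degeneracy of the braiding of $Z(\cat{C})$, and the reverse via the uniqueness clause of Theorem~\ref{thmribbon}. The only cosmetic difference is that the paper runs the last implication as (iv)$\Rightarrow$(iii) by identifying the two ribbon Grothendieck-Verdier categories directly, whereas you conclude (iv)$\Rightarrow$(i) by comparing dualizing objects; given (i)$\Leftrightarrow$(iii) this is logically the same argument.
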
 

\begin{proof}
	The dualizing object in $Z(\cat{C})$
	(by its definition in Proposition~\ref{proppivotal})
	 is $\alpha$  with the half braiding from Lemma~\ref{defhalfbraiding}. With this knowledge, \ref{corsphericali} $\Leftrightarrow$ \ref{corsphericalii}
	is already a part of Lemma~\ref{defhalfbraiding}.
	
	Since the Grothendieck-Verdier duality is the rigid duality tensored with the dualizing object, 
	\ref{corsphericali} $\Leftrightarrow$ \ref{corsphericaliii} is clear. 
	
	Finally, if~\ref{corsphericaliii} holds, then $Z(\cat{C})$ is ribbon (in the traditional sense) and 
	factorizable by \cite[Proposition~4.4]{eno-d}
	and has therefore a trivial Müger center by \cite[Theorem~1.1]{shimizumodular}. 
	This proves \ref{corsphericaliii} $\Rightarrow$ \ref{corsphericaliv}. 
	For the converse \ref{corsphericaliv} $\Rightarrow$ \ref{corsphericaliii}
	note that if the balanced braided
	category $Z(\cat{C})$ is modular, then it must be rigid and ribbon.
	This makes $Z(\cat{C})$ into a ribbon Grothendieck-Verdier category
	$Z(\cat{C})^\text{rigid}$ with rigid duality.
	In order to conclude that \ref{corsphericaliii} holds, i.e.\ that the Grothendieck-Verdier duality of $Z(\cat{C})$, equipped with the structure from Theorem~\ref{thmribbon}, is rigid, it suffices to prove
	$Z(\cat{C})=Z(\cat{C})^\text{rigid}$ as ribbon Grothendieck-Verdier categories. 
	This is indeed a priori not clear, but follows from the uniqueness statement contained in Theorem~\ref{thmribbon} because the underlying balanced braided structure of both categories agree.
	\end{proof}

\section{The low-dimensional topology implications\label{secldt}}
Ribbon Grothendieck-Verdier categories are intimately related to low-dimensional topology.
More precisely, they are equivalent to cyclic algebras over the framed little disks operad~\cite[Theorem 5.13]{cyclic}. 
Here the algebras take values in a symmetric monoidal bicategory $\Lexf$ of linear categories (subject to finiteness conditions), left exact functors and natural transformations.
Furthermore, it is shown in~\cite{mwansular} that ribbon Grothendieck-Verdier categories are equivalent to
so-called \emph{ansular functors}, i.e.\  systems of handlebody group representations that are compatible with gluing. In particular, 
an ansular functor assigns to a three-dimensional handlebody, possibly with a fixed number of disks embedded in its boundary, a vector space together with an action of the 
handlebody group of that handlebody.
From this perspective, Theorem~\ref{thmribbon} tells us the following:
For any pivotal finite tensor category $\cat{C}$,
we obtain an ansular functor, namely the one built from the ribbon Grothendieck-Verdier category $Z(\cat{C})$.
In fact, the factorization homology methods developed in~\cite{brochierwoike} will prove that $Z(\cat{C})$ yields in fact a \emph{modular functor}, i.e.\ we get representations of mapping class groups of surfaces
 and not just handlebody groups. 
However, this is not needed for the considerations in this section.
We will only be interested in the vector spaces
$B^{Z(\cat{C})}_g$ that the ansular functor for $Z(\cat{C})$ assigns to a handlebody whose boundary is a surface of genus $g$. These vector spaces are often referred to as the \emph{spaces of conformal blocks} of $Z(\cat{C})$, and they can be explicitly described as we will see below. 
We will for simplicity be interested only in closed surfaces. 
For example, 
by \cite[Corollary~6.3]{mwansular} the vector space $B_0^{Z(\cat{C})}$ is the morphism space
$Z(\cat{C})(\alpha , I)$, where $\alpha$ is equipped with the half braiding
from Lemma~\ref{lemmahalfbraiding}.
We may use this to derive the following topological description of sphericality:
%If $\cat{C}$ is semisimple, i.e.\ a pivotal fusion category, then $Z(\cat{C})(\alpha , I)$ is either one-dimensional (namely if $\alpha\cong I$ in $Z(\cat{C})$ --- which means exactly that $\cat{C}$ is spherical) or zero-dimensional (if they are \emph{not} isomorphic).  Therefore, we  have proven:
 
 \begin{corollary}\label{ValueonS2}
 A pivotal finite tensor category is spherical if and only if the space of conformal blocks of the sphere for its Drinfeld center is one-dimensional.
 \end{corollary}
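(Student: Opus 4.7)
The plan is to combine two ingredients that have already been established in the paper: first, the explicit formula $B_0^{Z(\cat{C})} \cong Z(\cat{C})(\alpha, I)$ recalled immediately before the statement (from~\cite[Corollary~6.3]{mwansular}), and second, the equivalence contained in Lemma~\ref{lemmahalfbraiding} that sphericity of $\cat{C}$ is equivalent to $\alpha \cong I$ as objects of $Z(\cat{C})$, where $\alpha$ carries the half braiding from that lemma. With these in hand, the whole corollary reduces to showing that $\dim_k Z(\cat{C})(\alpha, I) = 1$ if and only if $\alpha \cong I$ in $Z(\cat{C})$.

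For that reduced statement, the first step is to tensor with the inverse of $\alpha$ (which exists in $Z(\cat{C})$, again by Lemma~\ref{lemmahalfbraiding}) to obtain the isomorphism
\begin{align}
Z(\cat{C})(\alpha, I) \;\cong\; Z(\cat{C})(I, \alpha^{-1}).
\end{align}
The next step is to observe that both $I$ and $\alpha^{-1}$ are simple objects of $Z(\cat{C})$: the unit $I$ is simple because $Z(\cat{C})$ is itself a finite tensor category, and $\alpha^{-1}$ is simple because invertible objects in a finite tensor category are automatically simple. Since $k$ is algebraically closed, Schur's Lemma then forces the hom space $Z(\cat{C})(I, \alpha^{-1})$ to be one-dimensional when $\alpha^{-1} \cong I$ and zero otherwise. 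Chaining these equivalences yields $\dim_k B_0^{Z(\cat{C})} = 1 \iff \alpha \cong I$ in $Z(\cat{C}) \iff \cat{C}$ is spherical.

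I do not expect a genuine obstacle: the topological content (identification of the sphere conformal block with the indicated morphism space) and the algebraic content (equivalence of $\alpha \cong I$ in $Z(\cat{C})$ with sphericity) are both imported verbatim from earlier results, so the corollary is essentially a one-line calculation with Schur's Lemma on top. The only point worth double-checking, though it is not really part of the proof, is the compatibility of the ansular functor's value on the genus-zero handlebody with what one would classically call the ``sphere conformal block'', but this is a background statement already taken care of in the cited literature on ansular functors.
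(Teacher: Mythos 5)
Your argument is correct and follows essentially the same route as the paper's: identify $B_0^{Z(\cat{C})}$ with $Z(\cat{C})(\alpha,I)$, invoke Lemma~\ref{lemmahalfbraiding} for the equivalence of sphericity with $\alpha\cong I$ in $Z(\cat{C})$, and close with Schur's Lemma using that $I$ and $\alpha$ are both simple. The intermediate step of tensoring with $\alpha^{-1}$ to rewrite the hom space is harmless but unnecessary — Schur applies directly to $Z(\cat{C})(\alpha,I)$ since $\alpha$ and $I$ are each simple, which is exactly what the paper does.
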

 
\begin{proof}
	Sphericality is equivalent to $\alpha \cong I$ in $Z(\cat{C})$,
	where $\alpha$ has the half braiding from Lemma~\ref{defhalfbraiding}. 
	
	Therefore, if $\cat{C}$ is spherical, then $Z(\cat{C})(\alpha , I)\cong Z(\cat{C})(I,I)\cong k$ because the unit of $\cat{C}$ is assumed to be simple (and then the unit of $Z(\cat{C})$ is also simple).
	
	 Conversely, if the  conformal block of the sphere $Z(\cat{C})(\alpha , I)$ is one-dimensional, there exists a non-zero map $\alpha \to I$ in $Z(\cat{C})$. But $I$ is simple, and so is $\alpha$ (since it is the image of $I$ under an equivalence). By Schur's Lemma, this implies that $\alpha \cong I$ in $Z(\cat{C})$, thereby making $\cat{C}$ spherical.
	\end{proof}

\begin{remark}
For the Drinfeld center of a pivotal fusion category,
 the dimensions of the vector spaces assigned to the sphere and torus agree with the dimensions
of the corresponding string-net spaces constructed from a not necessarily spherical pivotal fusion category by Runkel~\cite{Non-s-string-nets} (and at least in the Example~\ref{exampleGabelian}, we give some higher genus comparisons). 
This suggests	 that the modular functor
associated to $Z(\cat{C})$ admits a description in terms of string-nets. We hope to come back 
to this in the future.    
\end{remark}

\spaceplease
\begin{example}\label{exampleGabelian}
Let us consider again the ribbon Grothendieck-Verdier category $Z(\Vect_G^{\lambda,d})$
from Example~\ref{exgraded2}. 
Thanks to \cite[Corollary~6.3]{mwansular},
 the space $B_g^{Z(\Vect_G^{\lambda,d})}$ is given by 
 the morphism space $Z(\Vect_G^{\lambda,d})(\alpha , \mathbb{F}^{\otimes g})$
  from $\alpha=(I,\omega^2)$ to the $g$-th monoidal power 
 of the 
  canonical 
coend 
\begin{align}
\mathbb{F}= \int^{X\in Z(\Vect_G^\lambda) } DX\otimes X \ \ .
\end{align}
If we work over $\mathbb{C}$, the category
$Z(\Vect_G^{\lambda,d})$ is semisimple. 
The coend now reduces a finite direct $\mathbb{F} = \alpha \otimes (\bigoplus_{s} s^{\vee}\otimes s )$, where the sum runs over the
isomorphism classes of simple objects of $Z(\Vect_G^{\lambda,d})$. This can be used to compute the dimensions of the spaces of conformal blocks 
via \cite[Example~6.4]{mwansular}. Simplifications arise if we assume that $G$ is abelian and choose $\lambda$ trivial, i.e.\ $\lambda=1$. 
Then all the simple objects of $Z(\Vect_G^{1,d})$ are invertible
and hence $\mathbb{F}\cong \alpha\otimes I^{\oplus |G|^2}$
(note that $|G|^2$ is the
number of simple objects of $Z(\Vect_G^{1,d})$). 
Now
\begin{align}
B_g^{Z(\Vect_G^{1,d})}\cong Z(\Vect_G^{\lambda,d})(\alpha , \mathbb{F}^{\otimes g})\cong \mathbb{C}^{|G|^{2g}	}\otimes  Z(\Vect_G^{\lambda,d})(\alpha,\alpha^{\otimes g}) \ . 
\end{align}
In our example, every invertible object in $Z(\Vect_G^{\lambda,d})$ is simple, so $Z(\Vect_G^{\lambda,d})(\alpha,\alpha^{\otimes g})$ is one-dimensional if $\alpha \cong \alpha^{\otimes g}$ and zero otherwise. 
Since $\alpha \cong \alpha^{\otimes g}$ is equivalent to $d^{2(g-1)}=1$, we find with the Euler characteristic
$\chi(\Sigma_g)=2-2g$ of the surface $\Sigma_g$,
\begin{align}
\dim B_g^{Z(\Vect_G^{1,d})} = \begin{cases}
|G|^{2g} & , \text{ if } d^{\chi(\Sigma_g)} = 1 \ ,  \\
0 &, \text{ if } d^{\chi(\Sigma_g)}\neq 1 \ . 
\end{cases} 
\end{align}
For $G=\Z_n$ and $\lambda=1$, this  agrees with the dimensions of the string-net space computed in~\cite[Proposition~1.3]{Non-s-string-nets}. 
In summary, one observes the following: In the spherical case $d^2=1$, we find in genus $g$ a $|G|^{2g}$-dimensional vector space. 
In the non-spherical case $d^2 \neq 1$, the vector space associated to the sphere becomes zero, but we still have non-trivial higher genus contributions whenever higher even powers of $d$ become trivial. More precisely, this happens at those surfaces $\Sigma_g$ for which $d^{\chi(\Sigma_g)} = 1$. One may interpret this condition as a higher genus version of sphericality. It is always fulfilled for the torus because of $\chi(\Sigma_g)=0$; in fact, that must also be true by Corollary~\ref{cortorus}.
\end{example}

	Example~\ref{exampleGabelian} is of course a tractable one; it is in particular semisimple. One may 
	ask for more complicated examples, but actually with a little more effort, it is not too hard to directly write down the general case with the tools from \cite[Section~6]{mwansular}.
	We refrain from spelling out the details and just give the result:
\begin{corollary}\label{corconfblocks}	If $\cat{C}$ is a pivotal finite tensor category, the space 
	of conformal blocks in genus $g$  for the Drinfeld center $Z(\cat{C})$ is
	given by
	\begin{align}
B_g^{Z(\cat{C}) } \cong	Z(\cat{C})\left(\alpha^{\otimes (g-1)} , \int^{X\in  Z(\cat{C}) }     X^\vee \otimes X\right) \ . \label{eqnconfblockZ}
	\end{align}\end{corollary}

\begin{remark}
	The expression on the right hand side of~\eqref{eqnconfblockZ} can be interpreted as the 
	`$\alpha^{\otimes (g-1)}$-twisted'
	invariants of the usual canonical coend $\int^{X\in  Z(\cat{C}) }     X^\vee \otimes X$ of the Drinfeld
	center. 
		As in Example~\ref{exampleGabelian}, the condition $\alpha^{\otimes (g-1)}\cong I$ in $Z(\cat{C})$ may be understood as a higher genus version of sphericality (with the case $g=0$ being the original one). 
	The torus is once again the only surface for which this condition is always empty.
	If $\cat{C}$ is given by modules over a pivotal finite-dimensional
	Hopf algebra $H$, the 
	 coend 
	 $\int^{X\in  Z(\cat{C}) }     X^\vee \otimes X$
	 is the coadjoint representation of the quantum double $D(H)$.  
	 Then~\eqref{eqnconfblockZ} specializes to
	 \begin{align}
	 	\Hom_{D(H)}\left(    \alpha^{\otimes (g-1)} , D(H)^*_\text{coadj}\right)
	 \ . 
	 \end{align}
 A non-spherical example for $H$ is Sweedler's Hopf algebra, see \cite[Example~3.5.6]{dsps}, even though it is actually trace spherical. 
	\end{remark}

	If in Corollary~\ref{corconfblocks} we set $g=1$, we find
	\begin{align}
		B_1^{Z(\cat{C}) } \cong	Z(\cat{C})\left(I , \int^{X\in  Z(\cat{C}) }     X^\vee \otimes X\right) \ . 
		\end{align}
	\label{Value on T2}

	\begin{corollary}\label{cortorus}
		Let $\cat{C}$ be a pivotal fusion category. The dimension of the vector space $B^{Z(\cat{C})}_1$ 
		is given by the number of simple objects in $Z(\cat{C})$.   
	\end{corollary}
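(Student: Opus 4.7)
The plan is to invoke \cite[Example~6.7]{mwansular}, which identifies the genus-one conformal block of any ribbon Grothendieck-Verdier category $\cat{A}$ in $\Lexf$ with the coend $\int^{X \in \cat{A}} \cat{A}(X, X)$ (or equivalently, up to the duality, $\int^X \cat{A}(DX, X)$ — both give the same vector space, morally the Hochschild/cocentric invariant of $\cat{A}$). Applied to the ribbon Grothendieck-Verdier category $Z(\cat{C})$ produced by Theorem~\ref{thmribbon}, this presents $B^{Z(\cat{C})}_1$ as such a coend over $Z(\cat{C})$.

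Next I would use that, since $\cat{C}$ is a pivotal fusion category, the Drinfeld center $Z(\cat{C})$ is again a fusion category; in particular it is finite semisimple with finitely many isomorphism classes of simple objects. Under semisimplicity, a coend of the shape $\int^{X} \cat{A}(X, X)$ decomposes as a direct sum over isomorphism classes of simples: each simple $S$ contributes a summand $Z(\cat{C})(S, S)$, and by Schur's Lemma (using that $k$ is algebraically closed) this is one-dimensional. The twist by the Grothendieck-Verdier duality $D$, if present in the formula from \cite[Example~6.7]{mwansular}, does not affect this count, because $D$ permutes the simple objects and $Z(\cat{C})(S, DS)$ is either zero or one-dimensional, in a total number equal to the number of simples.

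Summing these one-dimensional contributions over the set of isomorphism classes of simple objects in $Z(\cat{C})$ produces the claimed formula. The only potential obstacle is matching the precise shape of the coend used in \cite[Example~6.7]{mwansular} to the semisimple bookkeeping; however, in the semisimple setting this reduces to a direct application of Schur's Lemma and the standard decomposition of (co)ends, so nothing more is required.
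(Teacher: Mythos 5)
Your overall strategy matches the paper's: the corollary is cited as an immediate consequence of \cite[Example~6.7]{mwansular}, and the implicit argument is exactly the one you sketch --- decompose the relevant (co)end over isomorphism classes of simples using semisimplicity, then apply Schur's Lemma. In the fusion case your identification $B_1^{Z(\cat{C})} \cong \int^{X} Z(\cat{C})(X,X)$ is fine, since the reference's formula (spelled out later in the paper for general genus as $Z(\cat{C})\bigl(\alpha^{\otimes(g-1)}, \int^{X}X^\vee\otimes X\bigr)$, which for $g=1$ is just $Z(\cat{C})\bigl(I,\int^X X^\vee\otimes X\bigr)$) commutes with the hom-functor by exactness, and each simple $S$ contributes $Z(\cat{C})(S,S)\cong k$.

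The genuine gap is your fallback claim that a $D$-twisted version of the coend would give the same count. The assertion that ``$D$ permutes the simple objects and $Z(\cat{C})(S,DS)$ is either zero or one-dimensional, in a total number equal to the number of simples'' is false as reasoning: $\sum_S \dim\,\cat{A}(DS,S)$ counts only those simples fixed (up to isomorphism) by the permutation $S\mapsto DS$, which is in general strictly smaller than the total number of simples. (Already for $\Vect_{\mathbb{Z}/3}$ with the rigid dual, only the unit is self-dual.) Moreover $\int^X\cat{A}(DX,X)$ does not even have the correct variances to be a coend, since $D$ is contravariant. This hedge was meant to cover the case where the formula from \cite[Example~6.7]{mwansular} involves the Grothendieck--Verdier dual, but if the formula really were of that twisted form the conclusion would fail; what actually saves the argument is that for $g=1$ the dualizing object $\alpha$ enters only through $\alpha^{\otimes(g-1)}=I$, so no twist occurs. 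You should verify the precise shape of the cited formula rather than relying on a heuristic that a $D$-twist ``does not affect the count.''
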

	
	\begin{proof} The proof is essentially a special case of \cite[Example~6.7]{mwansular}:
		If $\cat{C}$ is semisimple, i.e.\ a fusion category, then 
		$ \int^{X\in  Z(\cat{C}) }     X^\vee \otimes X \cong \bigoplus_i X_i^\vee \otimes X_i$ with the sum running over all the simple objects in $Z(\cat{C})$, see  \cite[Corollary~5.1.9]{kl}. This implies the assertion.
		\end{proof}

\spaceplease
\section{Classification of ribbon Grothendieck-Verdier structures on a balanced braided category\label{seccyclicstructures}}
In this final section,
we address a very natural problem that, in particular, we need to solve
to complete the proof of the uniqueness statement in Theorem~\ref{thmribbon}:
Given a ribbon Grothendieck-Verdier category $\cat{A}$ (in $\Lexf$),
we can ask whether the ribbon Grothendieck-Verdier duality that $\cat{A}$ comes equipped with is in fact the \emph{only}
 ribbon Grothendieck-Verdier duality on the underlying balanced braided category of $\cat{A}$. 
In other words, we would like to understand the set $\catf{RibGV}(\cat{A})$ of equivalence classes of ribbon Grothendieck-Verdier structures
 on the underlying balanced braided category of $\cat{A}$. This is a pointed set with the equivalence class of $\cat{A}$ being the base point.

In order to describe
$\catf{RibGV}(\cat{A})$,
we need some preparations:
Let $\cat{A}$ be a ribbon Grothendieck-Verdier category in $\Lexf$.
By the main result of \cite{cyclic} this is exactly a cyclic algebra over the cyclic framed $E_2$-operad. 
For the sake of conciseness, we will not expand here on the relatively involved technical details behind the notion of a cyclic framed $E_2$-algebra in a symmetric monoidal bicategory. For the reader interested in the details, we refer to \cite[Section~2]{cyclic} or, for a short summary, to \cite[Section~2]{mwansular}.
Let us now
introduce  notation
for the following automorphism 2-groups as defined in~\cite[Section~2.4]{cyclic}:
 \begin{itemize}
	\item We denote by $\catf{AUT}_{\framed}(\cat{A})$ the 2-group of automorphisms of $\cat{A}$
	as non-cyclic framed $E_2$-algebra (i.e.\ as balanced braided category). These are braided monoidal autoequivalences of $\cat{A}$ preserving the balancing.
The isomorphism classes of these autoequivalences form a group
\begin{align}
	\catf{Aut}_{\framed}(\cat{A}):=\pi_0(\catf{AUT}_{\framed}(\cat{A})) \ . \end{align}
The fundamental group $\pi_1(\catf{AUT}_{\framed}(\cat{A}),\id_\cat{A})$ is the group of 
monoidal automorphisms of $\id_\cat{A}$ (at this categorical level, there are no further conditions for the braiding and balancing):
\begin{align}
	\pi_1(\catf{AUT}_{\framed}(\cat{A}),\id_\cat{A}) =\catf{Aut}_{\otimes}(\id_\cat{A}) \ . 
	\end{align}

 	\item We denote by $\catf{cAUT}_{\framed}(\cat{A})$ 
  the 2-group of autoequivalences of $\cat{A}$ as \emph{cyclic} framed $E_2$-algebra in $\Lexf$ (i.e.\ as ribbon Grothendieck-Verdier category).
  These are braided monoidal automorphisms preserving the balancing that additionally preserve up to coherent isomorphism (this isomorphism is data) the symmetric non-degenerate pairing $X \boxtimes Y \mapsto \kappa(X,Y):=\cat{A}(DX,Y)$.
  We call these \emph{cyclic} balanced braided autoequivalences.
  Again, the path components of this 2-group form a group
   \begin{align}
  	\catf{cAut}_{\framed}(\cat{A}):=\pi_0(\catf{cAUT}_{\framed}(\cat{A}))\ . \end{align}
  The fundamental group $\pi_1(\catf{cAUT}_{\framed}(\cat{A}),\id_\cat{A})$ is the group of 
  monoidal automorphisms $\lambda$ of $\id_\cat{A}$ that satisfy $\lambda_{DX}=D\lambda_X$ for all $X\in\cat{A}$. Since this is again a compatibility with the cyclic structure, we denote this group by  $\catf{cAut}_{\otimes}(\id_\cat{A})$. Then we have
  \begin{align}
  	\pi_1(\catf{cAUT}_{\framed}(\cat{A}),\id_\cat{A}) =\catf{cAut}_{\otimes}(\id_\cat{A}) \ . 
  \end{align}

\end{itemize}
We will ultimately describe
$\catf{RibGV}(\cat{A})$ via the balanced version 
$Z_2^\catf{bal}(\cat{B})$
of the Müger center of $\cat{A}$ or rather its Picard groupoid $\catf{Pic}(Z_2^\catf{bal}(\cat{A}))  $:
Recall that the \emph{Müger center $Z_2(\cat{B})$}
of a braided monoidal category $\cat{B}$ is the full subcategory of its \emph{transparent objects}, i.e.\
those objects that trivially double braid with every other object. If $Z_2(\cat{B})$ is generated by the monoidal unit of $\cat{B}$ under direct sums, then we call $\cat{B}$ (or rather its braiding)
 \emph{non-degenerate}. 
If $\cat{B}$ is also balanced, we  define the full subcategory $Z_2^\catf{bal}(\cat{B}) \subset Z_2(\cat{B})$ of those transparent 
objects  that have trivial balancing; we call 
$Z_2^\catf{bal}(\cat{B})$
the \emph{balanced Müger center of $\cat{B}$}. 

The \emph{Picard group} $\catf{Pic}(\cat{C})$ of any monoidal category $\cat{C}$
is the group of isomorphism classes of invertible
 objects with the monoidal product as group operation. 

Having established all this notation, we can state the main result of this section:

\begin{theorem}\label{thmexactseq}
	For any ribbon Grothendieck-Verdier category $\cat{A}$ in $\Lexf$, there is a canonical seven-term exact sequence
	\begin{center}
	\begin{equation}\centering
		\hspace*{-3cm}\begin{tikzcd}
			  1\ar[rrr] &&&
		\catf{cAut}_{\otimes}(\id_\cat{A}) 	\ar[rrr]
			&&& \catf{Aut}_{\otimes}(\id_\cat{A})    \ar[out=355,in=175,dllllll]    \\
			\catf{Aut}_\cat{A}(I) 	 \ar[rrr] &&&  \catf{cAut}_{\framed}(\cat{A}) \ar[rrr] &&& \catf{Aut}_{\framed}(\cat{A}) \ar[out=355,in=175,dllllll] \\ \catf{Pic}(Z_2^\catf{bal}(\cat{A}))   \ar[rrr] &&&   \catf{RibGV}(\cat{A}  ) \ar[rrr] &&& 1
		\end{tikzcd} 
	\end{equation}\end{center}
	of groups, except for the last stage where it is an exact sequence of pointed sets. 
\end{theorem}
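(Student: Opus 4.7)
The plan is to realize the seven-term sequence as the long exact homotopy sequence of a canonical fibration of pointed 2-groupoids, namely the homotopy quotient of a 2-groupoid of dualizing objects by the action of $\catf{AUT}_{\framed}(\cat{A})$. First, I would define $\mathbb{DO}(\cat{A})$ to be the groupoid whose objects are those dualizing objects $K'$ of $\cat{A}$ for which the resulting Grothendieck-Verdier structure, equipped with the fixed balancing on $\cat{A}$, is ribbon, and whose morphisms are isomorphisms in $\cat{A}$. With the given dualizing object $K$ as basepoint, one has $\pi_1(\mathbb{DO}(\cat{A}),K)\cong \catf{Aut}_\cat{A}(K)\cong \catf{Aut}_\cat{A}(I)$, the latter coming from the Grothendieck-Verdier identity $DI\cong K$, and $\pi_i=1$ for $i\geq 2$.

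The key technical input is the identification $\pi_0\mathbb{DO}(\cat{A}) = \catf{Pic}(Z_2^\catf{bal}(\cat{A}))$ together with the torsor structure $L\cdot K':= K'\otimes L$. Boyarchenko-Drinfeld already establish the analogous torsor statement at the level of Grothendieck-Verdier structures with $\catf{Pic}$ of the full Müger center; the refinement needed here is that the twisted object $K\otimes L$ yields a \emph{ribbon} GV structure precisely when $L$ lies in the balanced Müger center. Using transparency of $L$ to obtain $\theta_{DX\otimes L}=\theta_{DX}\otimes \theta_L$ and the fact that $(\cat{A},K)$ is already ribbon, the ribbon condition $\theta_{D'Y}=D'\theta_Y$ for the twisted duality $D'=D(-)\otimes L$ reduces exactly to $\theta_L=\id_L$. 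I expect this balanced refinement of the Boyarchenko-Drinfeld torsor to be the main technical obstacle.

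Next, $\catf{AUT}_{\framed}(\cat{A})$ acts on $\mathbb{DO}(\cat{A})$ by $F\cdot K':=FK'$ (balanced braided autoequivalences send ribbon GV dualizing objects to ribbon GV dualizing objects), so one can form the homotopy quotient $\mathbb{DO}(\cat{A})/\!\!/\catf{AUT}_{\framed}(\cat{A})$. Its homotopy groups at the basepoint $K$ unpack as follows: $\pi_0$, the set of orbits, is $\catf{RibGV}(\cat{A})$ by the very definition of equivalence of ribbon GV structures; $\pi_1$, the stabilizer 2-group of $K$, is $\catf{cAut}_{\framed}(\cat{A})$, since an autoequivalence stabilizing $K$ is precisely one that promotes to a cyclic autoequivalence; and $\pi_2$ consists of those monoidal automorphisms $\lambda$ of $\id_\cat{A}$ with $\lambda_K=\id$, equivalently those satisfying the cyclic condition $D\lambda_X=\lambda_{DX}$, which is by definition $\catf{cAut}_{\otimes}(\id_\cat{A})$.

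To conclude, the projection $\mathbb{DO}(\cat{A})/\!\!/\catf{AUT}_{\framed}(\cat{A}) \to B\catf{AUT}_{\framed}(\cat{A})$ is a fibration of pointed 2-groupoids with fiber $\mathbb{DO}(\cat{A})$. Substituting $\pi_2 B\catf{AUT}_{\framed}=\catf{Aut}_{\otimes}(\id_\cat{A})$, $\pi_1 B\catf{AUT}_{\framed}=\catf{Aut}_{\framed}(\cat{A})$ and $\pi_0 B\catf{AUT}_{\framed}=*$ together with the computations above into the induced long exact sequence of homotopy groups yields precisely the claimed seven-term sequence; the leading $1$ records $\pi_2\mathbb{DO}(\cat{A})=1$, and the final stage is only exact as pointed sets because $\pi_0 B\catf{AUT}_{\framed}$ carries no group structure.
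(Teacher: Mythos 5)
Your proof and the paper's are essentially the same in substance: both realize the seven-term sequence as the long exact homotopy sequence of a fibration of $2$-groupoids over $B\catf{AUT}_{\framed}(\cat{A})$ with fiber $\catf{PIC}(Z_2^\catf{bal}(\cat{A}))$, the fiber being identified via the Boyarchenko--Drinfeld torsor of dualizing objects refined by the balanced M\"uger center condition. The difference is in packaging. The paper (Theorem~\ref{thmfiberseq}) works directly with the $2$-groupoid $\catf{RIBGV}(\cat{A})$ of ribbon Grothendieck--Verdier structures, shows the forgetful map $q\colon\catf{RIBGV}(\cat{A})\to B\catf{AUT}_{\framed}(\cat{A})$ is a fibration by solving two explicit lifting problems, and then computes the fiber; in that route the identifications $\pi_1(\catf{RIBGV}(\cat{A}))=\catf{cAut}_{\framed}(\cat{A})$ and $\pi_2(\catf{RIBGV}(\cat{A}))=\catf{cAut}_{\otimes}(\id_\cat{A})$ hold by definition. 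You instead build the total space as a homotopy quotient $\mathbb{DO}(\cat{A})/\!\!/\catf{AUT}_{\framed}(\cat{A})$, which gives the fibration for free, but this trades the lifting problems for the task of identifying $\pi_1$ and $\pi_2$ of the quotient with the cyclic automorphism groups. That step is not automatic: it amounts to showing that a cyclic structure on a balanced braided autoequivalence $F$ is precisely the data of an isomorphism $FK\cong K$ (and the analogous claim on $2$-cells), which is exactly what the paper establishes through its analysis of morphisms of cyclic algebras in the arrow category. In particular, I would double-check the $\pi_2$ claim: a direct Yoneda computation with the universal element of $\cat{A}(K,X\otimes DX)$ gives that $\lambda_K=\id_K$ is equivalent to $\lambda_{DX}\circ D\lambda_X=\id$, i.e.\ to $D\lambda_X=\lambda_{DX}^{-1}$, so you should reconcile this with the sign conventions behind the paper's formula $\lambda_{DX}=D\lambda_X$. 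Finally, your sketch of $\pi_0\mathbb{DO}(\cat{A})=\catf{Pic}(Z_2^\catf{bal}(\cat{A}))$ should not \emph{assume} transparency of $L$ in order to derive the constraints on $L$; the paper's logic is to first specialize the ribbon identity $\theta_{D'X}=D'\theta_X$ to $X=K$, which yields $\theta_L=\id_L$, and then to general $X$, which yields transparency of $L$, with the converse being direct.
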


Instead of directly establishing the exact sequence from Theorem~\ref{thmexactseq} featuring the groups $\catf{cAut}_{\framed}(\cat{A}), \catf{Aut}_{\framed}(\cat{A}) $ and $ \catf{Pic}(Z_2^\catf{bal}(\cat{A}))$,
we will make a conceptually much clearer and simpler
statement about the corresponding 2-groups, i.e.\ about the `categorified' quantities. Let us explain this second slightly more abstract version of the statement and explain how it implies Theorem~\ref{thmexactseq}.
To this end, we introduce the full 2-groupoid
$\catf{RIBGV}(\cat{A})$
of ribbon Grothendieck-Verdier categories in $\Lexf$ whose underlying balanced braided category is  $\cat{A}$.
	This definition is made in such a way that $\catf{RibGV}(\cat{A})=\pi_0(\catf{RIBGV}(\cat{A}))$. 
Moreover, denote by $\catf{PIC}(   Z_2^\catf{bal}(\cat{A})  )$ the Picard 2-group of $Z_2^\catf{bal}(\cat{A})$ whose objects are the invertible objects in $Z_2^\catf{bal}(\cat{A})$ and whose morphisms are just morphisms between these invertible objects. This definition is made in such a way that $\catf{Pic}(   Z_2^\catf{bal}(\cat{A})  )=\pi_0(\catf{PIC}(   Z_2^\catf{bal}(\cat{A})  ))$.

The statement that we now want to prove is a statement about the homotopy fiber of the forgetful map $q: \catf{RIBGV}(\cat{A})\to B \catf{AUT}_{\framed}(\cat{A})$
from $\catf{RIBGV}(\cat{A})$ to the classifying space $B\catf{AUT}_{\framed}(\cat{A})$ of $\catf{AUT}_{\framed}(\cat{A})$:

\begin{theorem}\label{thmfiberseq}
	For any ribbon Grothendieck-Verdier category $\cat{A}$ in $\Lexf$, the homotopy fiber of 
	$q: \catf{RIBGV}(\cat{A})\to B \catf{AUT}_{\framed}(\cat{A})$ over the base point of $B \catf{AUT}_{\framed}(\cat{A})$ is equivalent to the Picard 2-group
	$\catf{PIC}(   Z_2^\catf{bal}(\cat{A})  )$ of the balanced Müger center $Z_2^\catf{bal}(\cat{A})$ of $\cat{A}$, i.e.\
	there is a fiber sequence
	\begin{align}
	\label{eqnfibersequence}	\catf{PIC}(   Z_2^\catf{bal}(\cat{A})  )\to \catf{RIBGV}(\cat{A})\ra{q} B \catf{AUT}_{\framed}(\cat{A}) 
		\end{align}
	of 2-groupoids.
	The map $\catf{PIC}(   Z_2^\catf{bal}(\cat{A})  )\to \catf{RIBGV}(\cat{A})$ sends $I$ to $\cat{A}$.
	\end{theorem}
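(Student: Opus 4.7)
The plan is to construct an explicit 2-functor $\Phi:\catf{PIC}(Z_2^\catf{bal}(\cat{A}))\to\catf{RIBGV}(\cat{A})$ by twisting the dualizing object of $\cat{A}$ by an invertible object in the balanced Müger center, show that $q\circ\Phi$ is canonically the constant map at the basepoint so that $\Phi$ lifts to $\tilde\Phi:\catf{PIC}(Z_2^\catf{bal}(\cat{A}))\to\mathrm{fib}(q)$, and then verify that $\tilde\Phi$ is an equivalence of 2-groupoids by computing $\pi_0$ and $\pi_1$.

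For the construction, let $K$ and $D$ denote the dualizing object and duality of $\cat{A}$. For an invertible $L\in Z_2^\catf{bal}(\cat{A})$, set $\Phi(L)$ to be the ribbon Grothendieck-Verdier structure on the underlying balanced braided category of $\cat{A}$ with dualizing object $K_L:=K\otimes L$. Invertibility of $L$ gives representability, and transparency of $L$ lets one move $L^{-1}$ past the second factor to obtain
\begin{align}
\cat{A}(K\otimes L, X\otimes Y)\cong \cat{A}(K, X\otimes L^{-1}\otimes Y)\cong \cat{A}(D_LX, Y),
\end{align}
so that $K_L$ is indeed dualizing with a canonically determined duality $D_L$. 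The monoidal, braided and balanced data are left untouched, and the ribbon axiom $\theta_{D_LX}=D_L\theta_X$ reduces to the ribbon axiom of $\cat{A}$ combined with $\theta_L=\id_L$, which is the balanced Müger center condition. Extending this functorially to morphisms and 2-morphisms (isomorphisms $L\to L'$ clearly give rise to equivalences $\Phi(L)\to\Phi(L')$ that are the identity on the underlying balanced braided category) yields a 2-functor $\Phi$ with $\Phi(I)=\cat{A}$. Because each $\Phi(L)$ has the same underlying balanced braided category as $\cat{A}$ equipped with the identity equivalence, the composition $q\circ\Phi$ is canonically constant at the basepoint, so $\Phi$ lifts to $\tilde\Phi$.

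To verify that $\tilde\Phi$ is an equivalence, one checks bijections on $\pi_0$ and $\pi_1$; the higher homotopy vanishes on both sides. The $\pi_1$ check is an application of the Yoneda lemma: automorphisms of $\Phi(L)$ in the fiber are ribbon Grothendieck-Verdier equivalences identical on the balanced braided structure, and by the universal property of the dualizing object these correspond bijectively to automorphisms of $L$ in $\cat{A}$, matching $\pi_1(\catf{PIC}(Z_2^\catf{bal}(\cat{A})),L)$. For essential surjectivity on $\pi_0$, given any ribbon Grothendieck-Verdier structure $\cat{A}^\dagger$ on the underlying balanced braided category of $\cat{A}$ with dualizing object $K^\dagger$ and duality $D^\dagger$, one considers the auto-equivalence $D^\dagger D^{-1}:\cat{A}\to\cat{A}$, which is balanced braided because $D$ and $D^\dagger$ are compatible with the same underlying braided/balanced structure. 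Setting $L:=D^\dagger D^{-1}(I)\cong D^\dagger K$, the crucial technical step is to establish the natural isomorphism $D^\dagger D^{-1}\cong L\otimes(-)$, which then gives $K^\dagger\cong K\otimes L$ by evaluating at the unit, and compatibility of both structures with the same braided/balanced data forces $L\in Z_2^\catf{bal}(\cat{A})$. Invertibility of $L$ follows because $D^\dagger D^{-1}$ is an equivalence.

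The hard part is precisely that last technical step — showing that $D^\dagger D^{-1}$ is of tensor form, induced by an invertible object, rather than being a more exotic balanced braided monoidal auto-equivalence. The key is the universal property of the dualizing object: $K$ determines $D$ up to unique coherent isomorphism via representability of $\cat{A}(K,X\otimes -)$, so any other Grothendieck-Verdier duality $D^\dagger$ on the same monoidal category must be comparable to $D$ through an invertible object encoding the relation between $K$ and $K^\dagger$. A Yoneda-style argument, analogous in spirit to Lemmas \ref{lemmachi} and \ref{lemmachi2} (which extract $\chi^F$ for a monoidal auto-equivalence $F$ from the Nakayama-style characterization of $\alpha$), transports the defining representability isomorphisms from $\cat{A}$ to $\cat{A}^\dagger$ to produce the required natural isomorphism $D^\dagger D^{-1}\cong L\otimes (-)$ and simultaneously verify that $L$ satisfies the balanced Müger center conditions, completing the proof of the fiber sequence.
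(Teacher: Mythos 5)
Your approach is, at its computational core, the same as the paper's: identify the fiber of $q$ with invertible objects of the balanced Müger center by showing that a Grothendieck--Verdier duality on the underlying balanced braided category must differ from $D$ by tensoring with an invertible object $L$, that the ribbon compatibility forces $L$ into $Z_2^\catf{bal}(\cat{A})$, and that morphisms in the fiber correspond to isomorphisms of such $L$ via a Yoneda argument. However, there are two genuine gaps.

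First, you work throughout with the \emph{strict} fiber of $q$ --- your essential surjectivity argument takes as input only a ribbon Grothendieck--Verdier structure $\cat{A}^\dagger$ on the underlying balanced braided category, not the full datum $(\cat{A}^\dagger,\phi)$ of an object of the \emph{homotopy} fiber, where $\phi$ is a balanced braided autoequivalence of $\cat{A}$. To conclude that this suffices you would need to know that $q$ is a fibration of 2-groupoids (so that the strict fiber is equivalent to the homotopy fiber). The paper proves exactly this as the first step, by solving two horn-filling problems: transporting a ribbon Grothendieck--Verdier structure along a balanced braided autoequivalence, and transporting a cyclic structure on a composite along a balanced braided 2-isomorphism. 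Without this step, your computation of $\pi_0$ of the homotopy fiber is incomplete. Relatedly, the vanishing of $\pi_2$ of the homotopy fiber is asserted in your proposal but requires justification; the paper handles this by noting that cyclicity of a 2-isomorphism is a property, so $q$ is a monomorphism on $\pi_2$.

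Second, the step you correctly single out as ``the hard part'' --- that any Grothendieck--Verdier duality $D^\dagger$ on the underlying monoidal category of $\cat{A}$ is of the form $D\otimes L$ for an invertible object $L$, rather than some more exotic antiequivalence --- is not actually proven in your proposal; you gesture at ``a Yoneda-style argument analogous in spirit to Lemmas~\ref{lemmachi} and~\ref{lemmachi2}.'' This statement is precisely \cite[Proposition~1.3~(i)]{bd}, which the paper invokes both for the identification of objects of the fiber and (again) for the identification of morphisms. It is a genuine input, not a formality, and it should be cited or reproved rather than left as an acknowledged gap. With \cite[Proposition~1.3~(i)]{bd} in hand, your unwinding of the ribbon condition $\theta_{D'X}=D'\theta_X$ (forcing $L$ transparent with $\theta_L=\id_L$, by first evaluating at $X=K$) does match the paper's computation.
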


\begin{remark}\label{rempi2}
	In~\eqref{eqnfibersequence} $\catf{PIC}(   Z_2^\catf{bal}(\cat{A})  )$, which was defined as a groupoid, is seen as a 2-groupoid with trivial 2-isomorphisms.
	In fact, it is directly clear that the homotopy fiber of $\catf{RIBGV}(\cat{A})\to B \catf{AUT}_{\framed}(\cat{A})$ is just a groupoid, even though it is a priori a 2-groupoid. This follows directly from the fact that being a cyclic 2-isomorphism is by definition a \emph{property}, i.e.\ $q$ induces a monomorphism on $\pi_2$. But then $\pi_2$ of the homotopy fiber of $q$ must be trivial.
	\end{remark}

Before proving Theorem~\ref{thmfiberseq},
we briefly say why this gives us immediately Theorem~\ref{thmexactseq}:

\begin{proof}[\slshape Proof of Theorem~\ref{thmexactseq}]
The fiber sequence from Theorem~\ref{thmfiberseq}
produces, after choosing 
base points,
a long exact sequence for the homotopy groups. 
We choose as base points
$\star \in B \catf{AUT}_{\framed}(\cat{A})$ (there is only one), 
$\cat{A} \in \catf{RIBGV}(\cat{A})$
and $I\in \catf{PIC}(   Z_2^\catf{bal}(\cat{A})  )$; these choices are possible because $\catf{PIC}(   Z_2^\catf{bal}(\cat{A})  )\to \catf{RIBGV}(\cat{A})$ sends $I$ to $\cat{A}$ by Theorem~\ref{thmfiberseq}.
The associated 
long exact sequence for homotopy groups
is the precisely the one from Theorem~\ref{thmexactseq}. 
\end{proof}

\spaceplease
\begin{proof}[\slshape Proof of Theorem~\ref{thmfiberseq}]
	We will first prove that the map
	$q: \catf{RIBGV}(\cat{A})\to B \catf{AUT}_{\framed}(\cat{A})$
	 between 2-groupoids is a fibration: To this end, we need to solve two lifting problems (we explain the notation together with the solution after writing the diagrams):
	\begin{equation}\label{eqnaliftingproblem}
	(1) \quad	\begin{tikzcd}
			0 \ar[rr,"\cat{D}"] \ar[dd,"",swap] && \catf{RIBGV}(\cat{A}) \ar[dd,"q"] \\ \\ 
			\Delta^1 \ar[rr,swap,"\varphi"] \ar[rruu,, ,dashed ] & & B \catf{AUT}_{\framed}(\cat{A}) \ , 
		\end{tikzcd}   \qquad (2) \quad \begin{tikzcd}
		\Lambda_1^2  \ar[rrrrr,"\text{$(\widetilde \varphi:\cat{D}\to \cat{E},\widetilde \psi:\cat{E}\to\cat{F})$}"] \ar[dd,"",swap] &&&&& \catf{RIBGV}(\cat{A}) \ar[dd,"q"] \\ \\ 
		\Delta^2 \ar[rrrrr,swap,"\text{$(\varphi,\psi,\chi,\alpha)$}"] \ar[rrrrruu,, ,dashed ] & &&&& B \catf{AUT}_{\framed}(\cat{A}) \ . 
	\end{tikzcd} 
	\end{equation}
	The notation and the solution to the lifting problems is as follows:
	\begin{enumerate}
		\item[(1)] Here $\varphi : \cat{A}\to\cat{A}$ is a balanced braided autoequivalence of $\cat{A}$
		(a 1-automorphism in $ B \catf{AUT}_{\framed}(\cat{A})$), and $\cat{D}$ is a ribbon Grothendieck-Verdier category
		 whose underlying balanced braided category
		  is $\cat{A}$. 
		We solve this lifting problem
		 by transporting the ribbon Grothendieck-Verdier duality of $\cat{D}$ along $\varphi$. 
		 This gives us a ribbon Grothendieck-Verdier category $\cat{D}'$ 
		 that comes with a \emph{cyclic}
		 balanced braided equivalence  $\widetilde \varphi : \cat{D}\to\cat{D}'$ 
		 (a 1-isomorphism in $\catf{RIBGV}(\cat{A})$)
		 mapping to $\varphi$ under $q$.
		 
		\item[(2)] The 2-simplex is labeled with three balanced braided autoequivalences $\varphi, \psi$ and $\chi$ of $\cat{A}$ and a balanced braided isomorphism $\alpha : \psi \varphi \cong \chi$. We denote by $\Lambda_1^2$
		the horn $0\to 1 \to 2$, i.e.\ the 2-simplex with vertices $0$, $1$ and $2$
		 with its interior and the edge from $1$ to $2$ deleted
		(since we are dealing with 2-groupoids, it does matter \emph{which} horn we consider).
		The horn
		$\Lambda_1^2$ is
		 labeled with cyclic equivalences $\widetilde \varphi:\cat{D}\to \cat{E}$ and $\widetilde \psi:\cat{E}\to\cat{F}$ between ribbon Grothendieck-Verdier categories
		 whose underlying balanced braided category is $\cat{A}$ such that $\widetilde \varphi$ and $\widetilde \psi$ map to $\varphi$ and $\psi$, respectively, under $q$. 
		We solve this lifting problem by transporting
		the cyclic structure on $\widetilde \psi \widetilde \varphi$ along $\alpha$ to $\chi$.
		This yields a cyclic equivalence $\widetilde \chi : \cat{D}\to\cat{F}$, and $\alpha$ induces by construction a cyclic isomorphism $\widetilde \psi \widetilde \varphi\cong \widetilde \chi$. This yields the needed lift. 
		\end{enumerate}
	This concludes the proof
	that $q$ is a fibration. Therefore, the strict fiber (and equivalently homotopy fiber) 
	 over the unique 0-cell in $B \catf{AUT}_{\framed}(\cat{A})$ gives us a fiber sequence
	\begin{align}\catf{F} \to \catf{RibGV}(\cat{A})\ra{q} B \catf{AUT}_{\framed}(\cat{A}) \quad \text{with}
	\quad \catf{F}:=q^{-1}(\star)\ .  \label{defFeqn}
	\end{align}
We know
by Remark~\ref{rempi2} already that
 $\catf{F}$ is actually a groupoid.  
	
	 We now have to investigate the fiber $\catf{F}$ in detail: Its objects are ribbon Grothendieck-Verdier structures
	  on the underlying balanced braided category of $\cat{A}$.
	 Let  
	 $D:\cat{A}\to\cat{A}^\opp$ be the ribbon Grothendieck-Verdier duality of $\cat{A}$
	  with dualizing object $K=DI$. Then any different Grothendieck-Verdier structure is, up to equivalence, given by $D'= D\otimes L$,  where $L\in\cat{A}$ is an invertible object.
	 This follows from  \cite[Proposition~2.3~(i)]{bd}, in our dual conventions. 
	 But now we have to ask when this new Grothendieck-Verdier structure is actually ribbon.
	 For this, we need $\theta_{D'X} = D' \theta_X$ for all $X\in \cat{A}$. But
	 $\theta_{D'X} = \theta_{ DX \otimes L} = c_{L,DX} c_{DX,L} ( \theta_{DX} \otimes \theta_{L}  )$ and 
	 $D' \theta_X =  D\theta_X\otimes \id_L = \theta_{DX} \otimes \id _L $.
	 Therefore, the condition amounts to
	 \begin{align}  c_{L,DX} c_{DX,L} ( \theta_{DX} \otimes \theta_{L}  )= 
	 \theta_{DX} \otimes \id _L
	 \quad \text{all}\quad X\in\cat{A} \ . \label{eqnthecondition}  \end{align}
	 Let us first evaluate in the case that $X$ is given by $K=DI$. Then we find thanks to $DK=D^2 I\cong I$ that
	 $\theta_L=\id_L$. Returning to the case of general $X$, we see that $L$ must be in the Müger center. Conversely, if $L$ is in the Müger center and $\theta_L=\id_L$, then~\eqref{eqnthecondition}
	 is satisfied. 
	 This tells us that the objects of the fiber $\catf{F}$ are in fact the invertible objects of the balanced Müger center
	 $Z_2^\catf{bal}(\cat{A})$. 
	 In order to see that $\catf{F}\simeq \catf{PIC}(   Z_2^\catf{bal}(\cat{A})  )$ as groupoids, we have to describe
	  morphisms in the fiber $\catf{F}$. In accordance with~\cite[Definition~2.17]{cyclic}, 
	  morphisms of (cyclic) algebras are defined as (cyclic) algebras in the arrow category of $\Lexf$.
	  After unpacking the definition, we find that a morphism 
	   $(\mathcal{A}, D_1=D\otimes L_1 ) \to (\mathcal{A}, D_2=D\otimes L_2 )$
	   in the fiber $\catf{F}$
	   between two ribbon Grothendieck-Verdier dualities on $\cat{A}$
	    is a morphism of cyclic $\framed$-algebras 
	    such that the underlying morphism of balanced braided monoidal categories is the identity. 
	    The only
	 data not fixed by this requirement is 
	 a natural isomorphism $t$ 
	 \begin{equation}
	 \begin{tikzcd}
	 \mathcal{A}\boxtimes \mathcal{A} \ar[d, "\kappa_1", swap] \ar[rr,"\id \boxtimes \id"] && \mathcal{A}\boxtimes \mathcal{A} \ar[d, "\kappa_2"] \ar[lld, Rightarrow ,"t",swap] \\
	 \Vect \ar[rr,"\id ", swap] && \Vect	 	\ , 
 	\end{tikzcd} 
 	\end{equation}  
 where $\kappa_i$ is the non-degenerate pairing $\kappa_i(X,Y)\coloneqq \mathcal{A}(D_iX,Y)$ on $\mathcal{A}$
 corresponding to $D_i$ with $i=1,2$. 
 The natural isomorphism $t$ has to satisfy one additional condition:
 To formulate this condition, recall that, when expressed in terms of the pairing, the ribbon Grothendieck-Verdier structure
 can be described through an isomorphism $\kappa(I,-\otimes -)\cong \kappa(-,-)$ denoted by $\gamma$
 in~\cite[Definition~5.4 \& Theorem~5.6]{cyclic}, and $t$ has to be compatible with the isomorphisms $\gamma_1$ and $\gamma_2$
 associated to $\kappa_1$ and $\kappa_2$, respectively. 
 This condition is that $\gamma_1$ and $\gamma_2$ define a 2-morphism in the 
  arrow category
  (use again \cite[Theorem~5.6]{cyclic} to see that
   this is the only condition). 
  After spelling out the condition, we see that it  amounts to the commutativity of the following
  diagram:
 \begin{equation}
 \begin{tikzcd}
 \mathcal{A}(D_1I,X\otimes Y) \ar[r,"="] \ar[d, "t_{I,X\otimes Y }",swap] & \mathcal{A}( K\otimes L_1,X\otimes Y) \ar[r,"\cong"] & \mathcal{A}(D_1X , Y) \ar[d, "t_{X, Y }"] \\ 
 \mathcal{A}(D_2I,X\otimes Y) \ar[r,"=",swap] & \mathcal{A}( K\otimes L_2,X\otimes Y) \ar[r,"\cong",swap] & \mathcal{A}(D_2X , Y)\ . 
 \end{tikzcd} \ \ 
 \end{equation}  
We conclude that $t$ is completely determined by its component
$t_{I,-}$, which can be arbitrary. Using the Yoneda lemma this implies that $t$ corresponds to an isomorphism $ K \otimes L_2\longrightarrow  K\otimes L_1$. Since it is an isomorphism, we can as well consider its inverse $ K\otimes L_1 \longrightarrow  K\otimes L_2$. 
It is left to show that every such isomorphism corresponds bijectively to an isomorphism $L_1 \to L_2$. But this follows directly from~\cite[Proposition~2.3~(i)]{bd}.   
\end{proof}

For any balanced braided category $\cat{A}$ in $\Lexf$ (whether it has a ribbon Grothendieck-Verdier structure or not), considering the forgetful map
$q: \catf{RIBGV}(\cat{A})\to B \catf{AUT}_{\framed}(\cat{A})$
makes sense if we understand
$\catf{RIBGV}(\cat{A})$ as the 2-groupoid of all ribbon Grothendieck-Verdier categories whose underlying balanced braided category is $\cat{A}$.
 Of course, now $\catf{RIBGV}(\cat{A})$ could be empty. 
 Then the homotopy fiber of $q$ describes the \emph{space of extensions to a ribbon Grothendieck-Verdier category} --- relative to the already existing balanced braided structure of $\cat{A}$. 
Theorem~\ref{thmfiberseq} now implies:

\begin{corollary}\label{spaceofextcor}
	For any balanced braided category $\cat{A}$ in $\Lexf$,
	the space a ribbon Grothendieck-Verdier extensions relative $\cat{A}$ is empty
	or equivalent to the classifying space $B\catf{PIC}(Z_2^\catf{bal}(\cat{A}))$
	of the Picard groupoid of the balanced braided Müger center of $\cat{A}$. 
	\end{corollary}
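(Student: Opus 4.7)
The plan is to deduce the corollary directly from Theorem~\ref{thmfiberseq}, with only a small amount of additional care needed to handle the two cases in the statement. The paragraph preceding the corollary identifies the space of ribbon Grothendieck-Verdier extensions relative to $\cat{A}$ with the homotopy fiber of the forgetful map $q\colon\catf{RIBGV}(\cat{A})\to B\catf{AUT}_{\framed}(\cat{A})$ over the unique $0$-cell of the target. If $\cat{A}$ admits no such extension, then $\catf{RIBGV}(\cat{A})$ is empty and so is this homotopy fiber, giving the first alternative in the statement.

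Otherwise, I would pick any one extension and use it to promote $\cat{A}$ to a ribbon Grothendieck-Verdier category, at which point Theorem~\ref{thmfiberseq} becomes applicable. The key observation is that every ingredient of that theorem---the 2-group $\catf{AUT}_{\framed}(\cat{A})$, the 2-groupoid $\catf{RIBGV}(\cat{A})$, and the balanced Müger center $Z_2^\catf{bal}(\cat{A})$---is manifestly a functor of the balanced braided structure alone, and hence does not depend on the auxiliary choice of extension. The fiber sequence produced by Theorem~\ref{thmfiberseq} therefore identifies the homotopy fiber of $q$ with $\catf{PIC}(Z_2^\catf{bal}(\cat{A}))$, viewed as a 2-groupoid with trivial 2-isomorphisms as in Remark~\ref{rempi2}; as a space this coincides with the classifying space $B\catf{PIC}(Z_2^\catf{bal}(\cat{A}))$ of the Picard groupoid, as claimed.

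There is no real obstacle here: the whole content of the corollary beyond Theorem~\ref{thmfiberseq} is the observation that the forgetful map $q$ and the resulting fiber sequence are well-defined regardless of whether $\cat{A}$ is ribbon Grothendieck-Verdier to begin with. The only mild subtlety is to check that the identification of the fiber with $\catf{PIC}(Z_2^\catf{bal}(\cat{A}))$ does not depend on which extension was used to supply the base point, but this is automatic because the groupoid $\catf{PIC}(Z_2^\catf{bal}(\cat{A}))$ is defined purely from the balanced braided structure of $\cat{A}$.
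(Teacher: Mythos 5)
Your proof is correct and takes essentially the same route as the paper, which simply asserts that Theorem~\ref{thmfiberseq} implies the corollary once one notices that the forgetful map $q$ and hence its fiber make sense for an arbitrary balanced braided $\cat{A}$. The only small quibble is with your last paragraph: the explicit identification of the fiber with $\catf{PIC}(Z_2^\catf{bal}(\cat{A}))$ constructed in the proof of Theorem~\ref{thmfiberseq} \emph{does} depend on the chosen base point $D$ (a different choice $D_0=D\otimes L_0$ shifts the identification by tensoring with $L_0$), and the reason you give for independence---that $\catf{PIC}(Z_2^\catf{bal}(\cat{A}))$ is defined purely from the balanced braided structure---does not actually establish it. But this is harmless here, because the corollary only claims an abstract equivalence of spaces, for which the base-point dependence of the identification is irrelevant.
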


\begin{corollary}\label{corcyclic}
		For a balanced braided category $\cat{A}$ in $\Lexf$ with non-degenerate braiding, there is, up to equivalence, at most one extension to a ribbon Grothendieck-Verdier structure. More precisely, the space of choices is empty or $B\Aut_\cat{A}(I)$. The latter is $Bk^\times$ if the unit of $\cat{A}$ is simple.	
\end{corollary}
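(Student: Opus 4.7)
The plan is to obtain this as an immediate application of Corollary~\ref{spaceofextcor}, which identifies the space of ribbon Grothendieck-Verdier extensions of $\cat{A}$ (when non-empty) with the classifying space $B\catf{PIC}(Z_2^\catf{bal}(\cat{A}))$ of the Picard groupoid of the balanced Müger center. So the only real task is to compute this Picard groupoid under the non-degeneracy hypothesis.

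By the definition recalled just before Theorem~\ref{thmexactseq}, non-degeneracy of the braiding means that $Z_2(\cat{A})$ is generated by $I$ under direct sums; in particular every object is isomorphic to some $I^{\oplus n}$. I would then argue that any invertible object $L \in Z_2^\catf{bal}(\cat{A}) \subset Z_2(\cat{A})$ satisfies $L \cong I$: writing $L \cong I^{\oplus m}$ and $L^{-1} \cong I^{\oplus n}$, the relation $L \otimes L^{-1} \cong I^{\oplus mn}$ together with $L \otimes L^{-1} \cong I$ forces $m = n = 1$ (indecomposability of $I$ inside $Z_2$, or a direct length argument, suffices). Moreover $I$ itself lies in $Z_2^\catf{bal}(\cat{A})$ since any balancing tautologically satisfies $\theta_I = \id_I$. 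Hence the only isomorphism class of objects of $\catf{PIC}(Z_2^\catf{bal}(\cat{A}))$ is $[I]$.

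The automorphism group of $I$ inside $\catf{PIC}(Z_2^\catf{bal}(\cat{A}))$ is, by the very definition of the Picard groupoid, equal to $\Aut_\cat{A}(I)$ (morphisms in $\catf{PIC}$ are just isomorphisms of underlying objects, and $I$ automatically lies in the balanced Müger center, with identity balancing). Therefore $\catf{PIC}(Z_2^\catf{bal}(\cat{A}))$ is equivalent as a groupoid to the one-object groupoid with automorphism group $\Aut_\cat{A}(I)$, whose classifying space is $B\Aut_\cat{A}(I)$. Feeding this into Corollary~\ref{spaceofextcor} yields that the space of ribbon Grothendieck-Verdier extensions is either empty or $B\Aut_\cat{A}(I)$, from which uniqueness up to equivalence follows at once. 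Finally, if the unit $I$ is simple, then $\End_\cat{A}(I) \cong k$, hence $\Aut_\cat{A}(I) \cong k^\times$ and the space of extensions becomes $Bk^\times$.

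The only step that is more than pure bookkeeping is the identification of the invertible objects in $Z_2^\catf{bal}(\cat{A})$ with $I$; this is where the non-degeneracy hypothesis does the entire work, and I do not expect it to present a real obstacle given how directly it follows from the definition.
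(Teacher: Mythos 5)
Your proof is correct and takes essentially the same route as the paper: identify the space of extensions with $B\catf{PIC}(Z_2^\catf{bal}(\cat{A}))$ via Corollary~\ref{spaceofextcor}, and observe that non-degeneracy forces the Picard groupoid of the balanced Müger center to be equivalent to the one-object groupoid on $\Aut_\cat{A}(I)$. The only cosmetic difference is that you spell out the computation of the Picard groupoid directly, whereas the paper additionally invokes Theorem~\ref{thmexactseq} for the uniqueness part before appealing to Corollary~\ref{spaceofextcor} for the space of choices; both steps rest on the same underlying fact.
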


\begin{proof}
	Non-degeneracy of the braiding means precisely that the Müger center is trivial. But this implies that the balanced Müger center is also trivial and hence 
	$\catf{Pic}(Z_2^\catf{bal}(\cat{A}))=0$.
	Now we use Theorem~\ref{thmexactseq} to obtain the uniqueness of the ribbon Grothendieck-Verdier structure of up equivalence.
	In order to describe the space of choices, we use Corollary~\ref{spaceofextcor}.
\end{proof}

We can now finish the proof of
Theorem~\ref{thmribbon}:
\begin{proof}[\slshape Proof of the uniqueness statement in Theorem~\ref{thmribbon}]\label{endofproof}
	We need to prove that the Drinfeld center $Z(\cat{C})$
	of a pivotal finite tensor category $\cat{C}$, with its usual balanced braided structure, admits at most one ribbon Grothendieck-Verdier structure.
	For this, recall that the braiding of $Z(\cat{C})$ is non-degenerate
	by \cite[Proposition~4.4]{eno-d} and \cite[Theorem~1.1]{shimizumodular}.
	Now the statement follows from Corollary~\ref{corcyclic}.
	\end{proof}

If the duality of a ribbon Grothendieck-Verdier category is actually rigid, the map $\catf{cAut}_{\framed}(\cat{A}) \to \catf{Aut}_{\framed}(\cat{A})$ splits:

\begin{corollary}
	For any finite ribbon category $\cat{A}$ (this includes rigidity), the  extensions
	to a ribbon Grothendieck-Verdier category 
	are in bijection to $\catf{Pic}(Z_2^\catf{bal}(\cat{A}))$, with the neutral element of this Picard group corresponding to the rigid Grothendieck-Verdier structure. 
	For this particular Grothendieck-Verdier structure, 
	$	\catf{cAut}(\cat{A})$ is a split extension of $\catf{RibAut}(\cat{A})$ by $\catf{Aut}_\cat{A}(I)  $; in other words,
	\begin{align}
		\catf{cAut}(\cat{A})\cong \catf{RibAut}(\cat{A}) \ltimes \catf{Aut}_\cat{A}(I)      \label{eqnformulacAut}
	\end{align} by a canonical group isomorphism.
\end{corollary}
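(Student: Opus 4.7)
The plan is to combine Theorem~\ref{thmexactseq} and Corollary~\ref{spaceofextcor} with the fact that a finite ribbon category admits the \emph{rigid} ribbon Grothendieck-Verdier structure (dualizing object $K=I$, duality $D=-^\vee$), and then produce the splitting from the canonical compatibility of any monoidal functor on a rigid category with the rigid duality.

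For the classification statement I would first observe that the existence of the rigid ribbon Grothendieck-Verdier structure on $\cat{A}$ makes the space of extensions non-empty, so by Corollary~\ref{spaceofextcor} it is equivalent to $B\catf{PIC}(Z_2^\catf{bal}(\cat{A}))$, whose $\pi_0$ is $\catf{Pic}(Z_2^\catf{bal}(\cat{A}))$. Tracing the parametrization in the proof of Theorem~\ref{thmfiberseq}, the invertible object $L \in Z_2^\catf{bal}(\cat{A})$ corresponds to the Grothendieck-Verdier duality $D' = D \otimes L$; hence $L=I$ corresponds to the rigid structure, which is the base point.

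For the semidirect product decomposition I would apply Theorem~\ref{thmexactseq} to the rigid ribbon Grothendieck-Verdier structure and argue that, in this case, the connecting map $\catf{Aut}_{\framed}(\cat{A}) \to \catf{Pic}(Z_2^\catf{bal}(\cat{A}))$ is trivial: any balanced braided autoequivalence $F$ of a rigid category is canonically compatible with rigid duality via the isomorphism $F(X^\vee) \cong (FX)^\vee$ induced by monoidality, so transporting the rigid structure along $F$ returns an equivalent rigid structure. A parallel argument for 2-morphisms shows that the map $\catf{Aut}_\otimes(\id_\cat{A}) \to \catf{Aut}_\cat{A}(I)$ is also trivial (any monoidal endo-natural automorphism is the identity on $I$, and the canonical lift is strict). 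Hence the seven-term sequence of Theorem~\ref{thmexactseq} collapses to a short exact sequence
\begin{equation}
1 \to \catf{Aut}_\cat{A}(I) \to \catf{cAut}_{\framed}(\cat{A}) \to \catf{Aut}_{\framed}(\cat{A}) \to 1 .
\end{equation}
The canonical compatibility $F(X^\vee) \cong (FX)^\vee$ then provides a canonical lift of every $F \in \catf{Aut}_{\framed}(\cat{A})$ to $\catf{cAut}_{\framed}(\cat{A})$, which gives the desired splitting $s$; the semidirect product formula~\eqref{eqnformulacAut} follows.

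The main delicate point I expect will be verifying that the lift $s$ is actually a group homomorphism, i.e., that the canonical cyclic structure on a composite $F \circ G$ agrees with the composition of the canonical cyclic structures on $F$ and on $G$. This is ultimately a coherence check for the compatibility of the pairing $\kappa(X,Y) = \cat{A}(DX,Y)$ with composition of monoidal equivalences preserving rigid duals; it should reduce to naturality of the duality isomorphism $F(X^\vee)\cong (FX)^\vee$ and the corresponding pentagon-type diagrams, but unpacking the cyclic structure in~\cite[Definition~2.17]{cyclic} and~\cite[Theorem~5.6]{cyclic} carefully will be the main technical step.
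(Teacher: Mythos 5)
Your proposal is correct and is essentially the paper's argument, with one organizational difference worth flagging. The paper constructs the splitting once and for all at the level of 2-groups: it defines a section $\catf{RIBAUT}(\cat{A})=\catf{AUT}_{\framed}(\cat{A})\to\catf{cAUT}_{\framed}(\cat{A})$ using the canonical cyclic structure on a ribbon (monoidal) autoequivalence, and then simply observes that a section of a map of 2-groups forces the associated long exact sequence of homotopy groups to break into split short exact sequences at every stage. You instead argue level-by-level that the two relevant connecting maps, $\catf{Aut}_{\framed}(\cat{A})\to\catf{Pic}(Z_2^\catf{bal}(\cat{A}))$ and $\catf{Aut}_\otimes(\id_\cat{A})\to\catf{Aut}_\cat{A}(I)$, vanish, and then separately build the group-theoretic splitting. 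The 2-group packaging is preferable precisely because it absorbs the ``delicate point'' you raise at the end: the canonical duality isomorphism $F(X^\vee)\cong(FX)^\vee$ coming from monoidality is compatible with composition by the uniqueness of duals, which is exactly the data making the section a strict 2-group morphism; once phrased that way, the fact that it induces a group homomorphism on $\pi_0$ and $\pi_1$ is automatic and there is no extra coherence diagram to chase. Your classification half (rigid structure is the base point, $\pi_0$ of the fiber is $\catf{Pic}(Z_2^\catf{bal}(\cat{A}))$) matches the paper exactly. In short: correct content, same idea, but lifting the section to the 2-group level, as the paper does, saves you the coherence verification you were (rightly) worried about.
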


\begin{proof}
	Consider $\cat{A}$ as ribbon Grothendieck-Verdier category with its rigid duality. 
	The ribbon autoequivalences of $\cat{A}$ form a 2-group that we denote by $\catf{RIBAUT}(\cat{A})$. We then have
	$\catf{RibAut}(\cat{A})=\pi_0(  \catf{RIBAUT}(\cat{A})   )$.
	A ribbon autoequivalence is in particular a monoidal functor and hence preserves duality
	in a canonical way,
	 and a monoidal automorphism of the identity of $\cat{A}$ is automatically compatible with the rigid duality.
	This provides us with a section
	$\catf{RIBAUT}(\cat{A})=\catf{AUT}_{\framed}(\cat{A})\to \catf{cAUT}_{\framed}(\cat{A})$
	for the map
	$\catf{cAUT}_{\framed}(\cat{A})\to \catf{RIBAUT}(\cat{A})=\catf{AUT}_{\framed}(\cat{A})$. 
	As a result, the long exact sequence from Theorem~\ref{thmexactseq} breaks up, at every level, into short exact sequences.
	This gives us in particular
	\begin{itemize}
		\item the short exact sequence
		\begin{align}	1 \to \catf{Aut}_\cat{A}(I) \to \catf{cAut}_{\framed}(\cat{A}) \to \catf{Aut}_{\framed}(\cat{A}) \to 1
		\end{align} which, by the just mentioned one-sided inverse to the last map, is split and hence gives us~\eqref{eqnformulacAut},
		\item and the isomorphism $\catf{Pic}(Z_2^\catf{bal}(\cat{A}))\cong \catf{CycStruc}(\cat{A}  )$.\end{itemize}
\end{proof}

	\small	\morespaceplease
	\bibliographystyle{alpha}

 \vspace*{1cm}
 \noindent \textsc{Perimeter Institute,  N2L 2Y5 Waterloo, Canada} \\[2ex]
	\noindent \textsc{Institut de Mathématiques de Bourgogne, UMR 5584, CNRS \& Université de Bourgogne, F-21000 Dijon, France}

\end{document}